\newtheorem{theorem}{Theorem}
\newtheorem{axiom}[theorem]{Axiom}
\newtheorem{conjecture}[theorem]{Conjecture}
\newtheorem{corollary}[theorem]{Corollary}
\newtheorem{definition}[theorem]{Definition}
\newtheorem{example}[theorem]{Example}
\newtheorem{exercise}[theorem]{Exercise}
\newtheorem{lemma}[theorem]{Lemma}
\newtheorem{proposition}[theorem]{Proposition}
\newtheorem{remark}[theorem]{Remark}
\newenvironment{proof}[1][Proof]{\noindent\textbf{#1.} }{\ \rule{0.5em}{0.5em}}
\chardef\@x10\chardef\@xv60
\def\tcitime{
\def\@time{%
  \@minute\time\@hour\@minute\divide\@hour\@xv
  \ifnum\@hour<\@x 0\fi\the\@hour:%
  \multiply\@hour\@xv\advance\@minute-\@hour
  \ifnum\@minute<\@x 0\fi\the\@minute
  }}%
\def\QCTOpt[#1]#2{%
  \def\QCTOptB{#1}
  \def\QCTOptA{#2}
}
\def\QCTNOpt#1{%
  \def\QCTOptA{#1}
  \let\QCTOptB\empty
}
\def\Qct{%
  \@ifnextchar[{%
    \QCTOpt}{\QCTNOpt}
}
\def\QCBOpt[#1]#2{%
  \def\QCBOptB{#1}
  \def\QCBOptA{#2}
}
\def\QCBNOpt#1{%
  \def\QCBOptA{#1}
  \let\QCBOptB\empty
}
\def\Qcb{%
  \@ifnextchar[{%
    \QCBOpt}{\QCBNOpt}
}
\def\PrepCapArgs{%
  \ifx\QCBOptA\empty
    \ifx\QCTOptA\empty
      {}%
    \else
      \ifx\QCTOptB\empty
        {\QCTOptA}%
      \else
        [\QCTOptB]{\QCTOptA}%
      \fi
    \fi
  \else
    \ifx\QCBOptA\empty
      {}%
    \else
      \ifx\QCBOptB\empty
        {\QCBOptA}%
      \else
        [\QCBOptB]{\QCBOptA}%
      \fi
    \fi
  \fi
}
\def\GRAPHICSPS#1{%
 \ifcase\GRAPHICSTYPE%\GRAPHICSTYPE=0
   \special{ps: #1}%
 \or%\GRAPHICSTYPE=1
   \special{language "PS", include "#1"}%
%%%\or%\GRAPHICSTYPE=2
%%%  #1%
 \fi
}%
\def\graffile#1#2#3#4{%
%%% \ifnum\GRAPHICSTYPE=\tw@
%%%  %Following if using psfig
%%%  \@ifundefined{psfig}{\input psfig.tex}{}%
%%%  \psfig{file=#1, height=#3, width=#2}%
%%% \else
  %Following for all others
  % JCS - added BOXTHEFRAME, see below
    \leavevmode
    \raise -#4 \BOXTHEFRAME{%
        \hbox to #2{\raise #3\hbox to #2{\null #1\hfil}}}%
}%
\def\draftbox#1#2#3#4{%
 \leavevmode\raise -#4 \hbox{%
  \frame{\rlap{\protect\tiny #1}\hbox to #2%
   {\vrule height#3 width\z@ depth\z@\hfil}%
  }%
 }%
}%
\newif\ifwasdraft
\def\GRAPHIC#1#2#3#4#5{%
 \ifnum\draft=\@ne\draftbox{#2}{#3}{#4}{#5}%
  \else\graffile{#1}{#3}{#4}{#5}%
  \fi
 }%
\def\addtoLaTeXparams#1{%
    \edef\LaTeXparams{\LaTeXparams #1}}%
\newif\ifBoxFrame \BoxFramefalse
\newif\ifOverFrame \OverFramefalse
\newif\ifUnderFrame \UnderFramefalse
\def\BOXTHEFRAME#1{%
   \hbox{%
      \ifBoxFrame
         \frame{#1}%
      \else
         {#1}%
      \fi
   }%
}
\def\doFRAMEparams#1{\BoxFramefalse\OverFramefalse\UnderFramefalse\readFRAMEparams#1\end}%
\def\readFRAMEparams#1{%
 \ifx#1\end%
  \let\next=\relax
  \else
  \ifx#1i\dispkind=\z@\fi
  \ifx#1d\dispkind=\@ne\fi
  \ifx#1f\dispkind=\tw@\fi
  \ifx#1t\addtoLaTeXparams{t}\fi
  \ifx#1b\addtoLaTeXparams{b}\fi
  \ifx#1p\addtoLaTeXparams{p}\fi
  \ifx#1h\addtoLaTeXparams{h}\fi
  \ifx#1X\BoxFrametrue\fi
  \ifx#1O\OverFrametrue\fi
  \ifx#1U\UnderFrametrue\fi
  \ifx#1w
    \ifnum\draft=1\wasdrafttrue\else\wasdraftfalse\fi
    \draft=\@ne
  \fi
  \let\next=\readFRAMEparams
  \fi
 \next
 }%
\def\IFRAME#1#2#3#4#5#6{%
      \bgroup
      \let\QCTOptA\empty
      \let\QCTOptB\empty
      \let\QCBOptA\empty
      \let\QCBOptB\empty
      #6%
      \parindent=0pt%
      \leftskip=0pt
      \rightskip=0pt
      \setbox0 = \hbox{\QCBOptA}%
      \@tempdima = #1\relax
      \ifOverFrame
          % Do this later
          \typeout{This is not implemented yet}%
          \show\HELP
      \else
         \ifdim\wd0>\@tempdima
            \advance\@tempdima by \@tempdima
            \ifdim\wd0 >\@tempdima
               \textwidth=\@tempdima
               \setbox1 =\vbox{%
                  \noindent\hbox to \@tempdima{\hfill\GRAPHIC{#5}{#4}{#1}{#2}{#3}\hfill}\\%
                  \noindent\hbox to \@tempdima{\parbox[b]{\@tempdima}{\QCBOptA}}%
               }%
               \wd1=\@tempdima
            \else
               \textwidth=\wd0
               \setbox1 =\vbox{%
                 \noindent\hbox to \wd0{\hfill\GRAPHIC{#5}{#4}{#1}{#2}{#3}\hfill}\\%
                 \noindent\hbox{\QCBOptA}%
               }%
               \wd1=\wd0
            \fi
         \else
            %\show\BBB
            \ifdim\wd0>0pt
              \hsize=\@tempdima
              \setbox1 =\vbox{%
                \unskip\GRAPHIC{#5}{#4}{#1}{#2}{0pt}%
                \break
                \unskip\hbox to \@tempdima{\hfill \QCBOptA\hfill}%
              }%
              \wd1=\@tempdima
           \else
              \hsize=\@tempdima
              \setbox1 =\vbox{%
                \unskip\GRAPHIC{#5}{#4}{#1}{#2}{0pt}%
              }%
              \wd1=\@tempdima
           \fi
         \fi
         \@tempdimb=\ht1
         \advance\@tempdimb by \dp1
         \advance\@tempdimb by -#2%
         \advance\@tempdimb by #3%
         \leavevmode
         \raise -\@tempdimb \hbox{\box1}%
      \fi
      \egroup%
}%
\def\DFRAME#1#2#3#4#5{%
 \begin{center}
     \let\QCTOptA\empty
     \let\QCTOptB\empty
     \let\QCBOptA\empty
     \let\QCBOptB\empty
     \ifOverFrame 
        #5\QCTOptA\par
     \fi
     \GRAPHIC{#4}{#3}{#1}{#2}{\z@}
     \ifUnderFrame 
        \nobreak\par #5\QCBOptA
     \fi
 \end{center}%
 }%
\def\FFRAME#1#2#3#4#5#6#7{%
 \begin{figure}[#1]%
  \let\QCTOptA\empty
  \let\QCTOptB\empty
  \let\QCBOptA\empty
  \let\QCBOptB\empty
  \ifOverFrame
    #4
    \ifx\QCTOptA\empty
    \else
      \ifx\QCTOptB\empty
        \caption{\QCTOptA}%
      \else
        \caption[\QCTOptB]{\QCTOptA}%
      \fi
    \fi
    \ifUnderFrame\else
      \label{#5}%
    \fi
  \else
    \UnderFrametrue%
  \fi
  \begin{center}\GRAPHIC{#7}{#6}{#2}{#3}{\z@}\end{center}%
  \ifUnderFrame
    #4
    \ifx\QCBOptA\empty
      \caption{}%
    \else
      \ifx\QCBOptB\empty
        \caption{\QCBOptA}%
      \else
        \caption[\QCBOptB]{\QCBOptA}%
      \fi
    \fi
    \label{#5}%
  \fi
  \end{figure}%
 }%
\def\makeactives{
  \catcode`\"=\active
  \catcode`\;=\active
  \catcode`\:=\active
  \catcode`\'=\active
  \catcode`\~=\active
}
   \gdef\activesoff{%
      \def"{\string"}
      \def;{\string;}
      \def:{\string:}
      \def'{\string'}
      \def~{\string~}
      %\bbl@deactivate{"}%
      %\bbl@deactivate{;}%
      %\bbl@deactivate{:}%
      %\bbl@deactivate{'}%
    }
\def\FRAME#1#2#3#4#5#6#7#8{%
 \bgroup
 \@ifundefined{bbl@deactivate}{}{\activesoff}
 \ifnum\draft=\@ne
   \wasdrafttrue
 \else
   \wasdraftfalse%
 \fi
 \def\LaTeXparams{}%
 \dispkind=\z@
 \def\LaTeXparams{}%
 \doFRAMEparams{#1}%
 \ifnum\dispkind=\z@\IFRAME{#2}{#3}{#4}{#7}{#8}{#5}\else
  \ifnum\dispkind=\@ne\DFRAME{#2}{#3}{#7}{#8}{#5}\else
   \ifnum\dispkind=\tw@
    \edef\@tempa{\noexpand\FFRAME{\LaTeXparams}}%
    \@tempa{#2}{#3}{#5}{#6}{#7}{#8}%
    \fi
   \fi
  \fi
  \ifwasdraft\draft=1\else\draft=0\fi{}%
  \egroup
 }%
\def\TEXUX#1{"texux"}
\def\func#1{\mathop{\rm #1}}%
\long\def\QQQ#1#2{%
     \long\expandafter\def\csname#1\endcsname{#2}}%
\long\def\QQA#1#2{}%
\def\QTR#1#2{{\csname#1\endcsname #2}}%(gp) Is this the best?
\def\EXPAND#1[#2]#3{}%
\def\NOEXPAND#1[#2]#3{}%
\def\LaTeXparent#1{}%
\def\ChildStyles#1{}%
\def\ChildDefaults#1{}%
\def\QTagDef#1#2#3{}%
\def\QQfnmark#1{\footnotemark}
\def\QQfntext#1#2{\addtocounter{footnote}{#1}\footnotetext{#2}}
\def\makeatletter\input gnuindex.sty\makeatother\makeindex{\makeatletter\input gnuindex.sty\makeatother\makeindex}%	
\def\initial#1{\bigbreak{\raggedright\large\bf #1}\kern 2\p@\penalty3000}}%
 \def\abstract{%
  \if@twocolumn
   \section*{Abstract (Not appropriate in this style!)}%
   \else \small 
   \begin{center}{\bf Abstract\vspace{-.5em}\vspace{\z@}}\end{center}%
   \quotation 
   \fi
  }%
   \def\registered{\relax\ifmmode{}\r@gistered
                    \else$\m@th\r@gistered$\fi}%
 \def\r@gistered{^{\ooalign
  {\hfil\raise.07ex\hbox{$\scriptstyle\rm\text{R}$}\hfil\crcr
  \mathhexbox20D}}}}{}%
\newdimen\theight
\def\Column{%
 \vadjust{\setbox\z@=\hbox{\scriptsize\quad\quad tcol}%
  \theight=\ht\z@\advance\theight by \dp\z@\advance\theight by \lineskip
  \kern -\theight \vbox to \theight{%
   \rightline{\rlap{\box\z@}}%
   \vss
   }%
  }%
 }%
\def\qed{%
 \ifhmode\unskip\nobreak\fi\ifmmode\ifinner\else\hskip5\p@\fi\fi
 \hbox{\hskip5\p@\vrule width4\p@ height6\p@ depth1.5\p@\hskip\p@}%
 }%
\def\miss{\hbox{\vrule height2\p@ width 2\p@ depth\z@}}%
\def\tcol#1{{\baselineskip=6\p@ \vcenter{#1}} \Column}  %
\def\newfmtname{LaTeX2e}
\def\chkcompat{%
   \if@compatibility
   \else
     \usepackage{latexsym}
   \fi
}
  \DeclareOldFontCommand{\rm}{\normalfont\rmfamily}{\mathrm}
  \DeclareOldFontCommand{\sf}{\normalfont\sffamily}{\mathsf}
  \DeclareOldFontCommand{\tt}{\normalfont\ttfamily}{\mathtt}
  \DeclareOldFontCommand{\bf}{\normalfont\bfseries}{\mathbf}
  \DeclareOldFontCommand{\it}{\normalfont\itshape}{\mathit}
  \DeclareOldFontCommand{\sl}{\normalfont\slshape}{\@nomath\sl}
  \DeclareOldFontCommand{\sc}{\normalfont\scshape}{\@nomath\sc}
\def\alpha{\Greekmath 010B }%
\def\beta{\Greekmath 010C }%
\def\gamma{\Greekmath 010D }%
\def\delta{\Greekmath 010E }%
\def\epsilon{\Greekmath 010F }%
\def\zeta{\Greekmath 0110 }%
\def\eta{\Greekmath 0111 }%
\def\theta{\Greekmath 0112 }%
\def\iota{\Greekmath 0113 }%
\def\kappa{\Greekmath 0114 }%
\def\lambda{\Greekmath 0115 }%
\def\mu{\Greekmath 0116 }%
\def\nu{\Greekmath 0117 }%
\def\xi{\Greekmath 0118 }%
\def\pi{\Greekmath 0119 }%
\def\rho{\Greekmath 011A }%
\def\sigma{\Greekmath 011B }%
\def\tau{\Greekmath 011C }%
\def\upsilon{\Greekmath 011D }%
\def\phi{\Greekmath 011E }%
\def\chi{\Greekmath 011F }%
\def\psi{\Greekmath 0120 }%
\def\omega{\Greekmath 0121 }%
\def\varepsilon{\Greekmath 0122 }%
\def\vartheta{\Greekmath 0123 }%
\def\varpi{\Greekmath 0124 }%
\def\varrho{\Greekmath 0125 }%
\def\varsigma{\Greekmath 0126 }%
\def\varphi{\Greekmath 0127 }%
\def\nabla{\Greekmath 0272 }
\def\FindBoldGroup{%
   {\setbox0=\hbox{$\mathbf{x\global\edef\theboldgroup{\the\mathgroup}}$}}%
}
\def\Greekmath#1#2#3#4{%
    \if@compatibility
        \ifnum\mathgroup=\symbold
           \mathchoice{\mbox{\boldmath$\displaystyle\mathchar"#1#2#3#4$}}%
                      {\mbox{\boldmath$\textstyle\mathchar"#1#2#3#4$}}%
                      {\mbox{\boldmath$\scriptstyle\mathchar"#1#2#3#4$}}%
                      {\mbox{\boldmath$\scriptscriptstyle\mathchar"#1#2#3#4$}}%
        \else
           \mathchar"#1#2#3#4% 
        \fi 
    \else 
        \FindBoldGroup
        \ifnum\mathgroup=\theboldgroup % For 2e
           \mathchoice{\mbox{\boldmath$\displaystyle\mathchar"#1#2#3#4$}}%
                      {\mbox{\boldmath$\textstyle\mathchar"#1#2#3#4$}}%
                      {\mbox{\boldmath$\scriptstyle\mathchar"#1#2#3#4$}}%
                      {\mbox{\boldmath$\scriptscriptstyle\mathchar"#1#2#3#4$}}%
        \else
           \mathchar"#1#2#3#4% 
        \fi     	    
	  \fi}
\newif\ifGreekBold  \GreekBoldfalse
\let\SAVEPBF=\pbf
\def\pbf{\GreekBoldtrue\SAVEPBF}%
  \newcounter{equationnumber}  
  \def\mathletters{%
     \addtocounter{equation}{1}
     \edef\@currentlabel{\theequation}%
     \setcounter{equationnumber}{\c@equation}
     \setcounter{equation}{0}%
     \edef\theequation{\@currentlabel\noexpand\alph{equation}}%
  }
    \def\BibTeX{{\rm B\kern-.05em{\sc i\kern-.025em b}\kern-.08em
                 T\kern-.1667em\lower.7ex\hbox{E}\kern-.125emX}}}{}%
\def\AmS{{\protect\usefont{OMS}{cmsy}{m}{n}%
                A\kern-.1667em\lower.5ex\hbox{M}\kern-.125emS}}}{}%
\let\DOTSI\relax
\def\RIfM@{\relax\ifmmode}%
\def\FN@{\futurelet\next}%
\def\iint{\DOTSI\intno@\tw@\FN@\ints@}%
\def\iiint{\DOTSI\intno@\thr@@\FN@\ints@}%
\def\iiiint{\DOTSI\intno@4 \FN@\ints@}%
\def\idotsint{\DOTSI\intno@\z@\FN@\ints@}%
\def\ints@{\findlimits@\ints@@}%
\newif\iflimtoken@
\newif\iflimits@
\def\findlimits@{\limtoken@true\ifx\next\limits\limits@true
 \else\ifx\next\nolimits\limits@false\else
 \limtoken@false\ifx\ilimits@\nolimits\limits@false\else
 \ifinner\limits@false\else\limits@true\fi\fi\fi\fi}%
\def\multint@{\int\ifnum\intno@=\z@\intdots@                          %1
 \else\intkern@\fi                                                    %2
 \ifnum\intno@>\tw@\int\intkern@\fi                                   %3
 \ifnum\intno@>\thr@@\int\intkern@\fi                                 %4
 \int}%                                                               %5
\def\multintlimits@{\intop\ifnum\intno@=\z@\intdots@\else\intkern@\fi
 \ifnum\intno@>\tw@\intop\intkern@\fi
 \ifnum\intno@>\thr@@\intop\intkern@\fi\intop}%
\def\intic@{%
    \mathchoice{\hskip.5em}{\hskip.4em}{\hskip.4em}{\hskip.4em}}%
\def\negintic@{\mathchoice
 {\hskip-.5em}{\hskip-.4em}{\hskip-.4em}{\hskip-.4em}}%
\def\ints@@{\iflimtoken@                                              %1
 \def\ints@@@{\iflimits@\negintic@
   \mathop{\intic@\multintlimits@}\limits                             %2
  \else\multint@\nolimits\fi                                          %3
  \eat@}%                                                             %4
 \else                                                                %5
 \def\ints@@@{\iflimits@\negintic@
  \mathop{\intic@\multintlimits@}\limits\else
  \multint@\nolimits\fi}\fi\ints@@@}%
\def\intkern@{\mathchoice{\!\!\!}{\!\!}{\!\!}{\!\!}}%
\def\plaincdots@{\mathinner{\cdotp\cdotp\cdotp}}%
\def\intdots@{\mathchoice{\plaincdots@}%
 {{\cdotp}\mkern1.5mu{\cdotp}\mkern1.5mu{\cdotp}}%
 {{\cdotp}\mkern1mu{\cdotp}\mkern1mu{\cdotp}}%
 {{\cdotp}\mkern1mu{\cdotp}\mkern1mu{\cdotp}}}%
\def\RIfM@{\relax\protect\ifmmode}
\def\text{\RIfM@\expandafter\text@\else\expandafter\mbox\fi}
\let\nfss@text\text
\def\text@#1{\mathchoice
   {\textdef@\displaystyle\f@size{#1}}%
   {\textdef@\textstyle\tf@size{\firstchoice@false #1}}%
   {\textdef@\textstyle\sf@size{\firstchoice@false #1}}%
   {\textdef@\textstyle \ssf@size{\firstchoice@false #1}}%
   \glb@settings}
\def\textdef@#1#2#3{\hbox{{%
                    \everymath{#1}%
                    \let\f@size#2\selectfont
                    #3}}}
\newif\iffirstchoice@
\def\Let@{\relax\iffalse{\fi\let\\=\cr\iffalse}\fi}%
\def\vspace@{\def\vspace##1{\crcr\noalign{\vskip##1\relax}}}%
\def\multilimits@{\bgroup\vspace@\Let@
 \baselineskip\fontdimen10 \scriptfont\tw@
 \advance\baselineskip\fontdimen12 \scriptfont\tw@
 \lineskip\thr@@\fontdimen8 \scriptfont\thr@@
 \lineskiplimit\lineskip
 \vbox\bgroup\ialign\bgroup\hfil$\m@th\scriptstyle{##}$\hfil\crcr}%
\def\Sb{_\multilimits@}%
\def\endSb{\crcr\egroup\egroup\egroup}%
\def\Sp{^\multilimits@}%
\newdimen\ex@
\def\rightarrowfill@#1{$#1\m@th\mathord-\mkern-6mu\cleaders
 \hbox{$#1\mkern-2mu\mathord-\mkern-2mu$}\hfill
 \mkern-6mu\mathord\rightarrow$}%
\def\leftarrowfill@#1{$#1\m@th\mathord\leftarrow\mkern-6mu\cleaders
 \hbox{$#1\mkern-2mu\mathord-\mkern-2mu$}\hfill\mkern-6mu\mathord-$}%
\def\leftrightarrowfill@#1{$#1\m@th\mathord\leftarrow
\mkern-6mu\cleaders
 \hbox{$#1\mkern-2mu\mathord-\mkern-2mu$}\hfill
 \mkern-6mu\mathord\rightarrow$}%
\def\overrightarrow{\mathpalette\overrightarrow@}%
\def\overrightarrow@#1#2{\vbox{\ialign{##\crcr\rightarrowfill@#1\crcr
 \noalign{\kern-\ex@\nointerlineskip}$\m@th\hfil#1#2\hfil$\crcr}}}%
\def\overleftarrow{\mathpalette\overleftarrow@}%
\def\overleftarrow@#1#2{\vbox{\ialign{##\crcr\leftarrowfill@#1\crcr
 \noalign{\kern-\ex@\nointerlineskip}$\m@th\hfil#1#2\hfil$\crcr}}}%
\def\overleftrightarrow{\mathpalette\overleftrightarrow@}%
\def\overleftrightarrow@#1#2{\vbox{\ialign{##\crcr
   \leftrightarrowfill@#1\crcr
 \noalign{\kern-\ex@\nointerlineskip}$\m@th\hfil#1#2\hfil$\crcr}}}%
\def\underrightarrow{\mathpalette\underrightarrow@}%
\def\underrightarrow@#1#2{\vtop{\ialign{##\crcr$\m@th\hfil#1#2\hfil
  $\crcr\noalign{\nointerlineskip}\rightarrowfill@#1\crcr}}}%
\def\underleftarrow{\mathpalette\underleftarrow@}%
\def\underleftarrow@#1#2{\vtop{\ialign{##\crcr$\m@th\hfil#1#2\hfil
  $\crcr\noalign{\nointerlineskip}\leftarrowfill@#1\crcr}}}%
\def\underleftrightarrow{\mathpalette\underleftrightarrow@}%
\def\underleftrightarrow@#1#2{\vtop{\ialign{##\crcr$\m@th
  \hfil#1#2\hfil$\crcr
 \noalign{\nointerlineskip}\leftrightarrowfill@#1\crcr}}}%
\def\qopnamewl@#1{\mathop{\operator@font#1}\nlimits@}
\let\nlimits@\displaylimits
\def\setboxz@h{\setbox\z@\hbox}
\def\varlim@#1#2{\mathop{\vtop{\ialign{##\crcr
 \hfil$#1\m@th\operator@font lim$\hfil\crcr
 \noalign{\nointerlineskip}#2#1\crcr
 \noalign{\nointerlineskip\kern-\ex@}\crcr}}}}
 \def\rightarrowfill@#1{\m@th\setboxz@h{$#1-$}\ht\z@\z@
  $#1\copy\z@\mkern-6mu\cleaders
  \hbox{$#1\mkern-2mu\box\z@\mkern-2mu$}\hfill
  \mkern-6mu\mathord\rightarrow$}
\def\leftarrowfill@#1{\m@th\setboxz@h{$#1-$}\ht\z@\z@
  $#1\mathord\leftarrow\mkern-6mu\cleaders
  \hbox{$#1\mkern-2mu\copy\z@\mkern-2mu$}\hfill
  \mkern-6mu\box\z@$}
\def\projlim{\qopnamewl@{proj\,lim}}
\def\injlim{\qopnamewl@{inj\,lim}}
\def\varinjlim{\mathpalette\varlim@\rightarrowfill@}
\def\varprojlim{\mathpalette\varlim@\leftarrowfill@}
\def\varliminf{\mathpalette\varliminf@{}}
\def\varliminf@#1{\mathop{\underline{\vrule\@depth.2\ex@\@width\z@
   \hbox{$#1\m@th\operator@font lim$}}}}
\def\varlimsup{\mathpalette\varlimsup@{}}
\def\varlimsup@#1{\mathop{\overline
  {\hbox{$#1\m@th\operator@font lim$}}}}
\def\align{\@verbatim \frenchspacing\@vobeyspaces \@alignverbatim
You are using the "align" environment in a style in which it is not defined.}
\let\csname endalign*\endcsname =\endtrivlist
\def\alignat{\@verbatim \frenchspacing\@vobeyspaces \@alignatverbatim
You are using the "alignat" environment in a style in which it is not defined.}
\let\csname endalignat*\endcsname =\endtrivlist
\def\xalignat{\@verbatim \frenchspacing\@vobeyspaces \@xalignatverbatim
You are using the "xalignat" environment in a style in which it is not defined.}
\let\csname endxalignat*\endcsname =\endtrivlist
\def\gather{\@verbatim \frenchspacing\@vobeyspaces \@gatherverbatim
You are using the "gather" environment in a style in which it is not defined.}
\let\csname endgather*\endcsname =\endtrivlist
\def\multiline{\@verbatim \frenchspacing\@vobeyspaces \@multilineverbatim
You are using the "multiline" environment in a style in which it is not defined.}
\let\csname endmultiline*\endcsname =\endtrivlist
\def\arrax{\@verbatim \frenchspacing\@vobeyspaces \@arraxverbatim
You are using a type of "array" construct that is only allowed in AmS-LaTeX.}
\def\tabulax{\@verbatim \frenchspacing\@vobeyspaces \@tabulaxverbatim
You are using a type of "tabular" construct that is only allowed in AmS-LaTeX.}
\let\csname endarrax*\endcsname =\endtrivlist
\let\csname endtabulax*\endcsname =\endtrivlist
\def\@@eqncr{\let\@tempa\relax
    \ifcase\@eqcnt \def\@tempa{& & &}\or \def\@tempa{& &}%
      \else \def\@tempa{&}\fi
     \@tempa
     \if@eqnsw
        \iftag@
           \@taggnum
        \else
           \@eqnnum\stepcounter{equation}%
        \fi
     \fi
     \global\tag@false
     \global\@eqnswtrue
     \global\@eqcnt\z@\cr}
 \def\endequation{%
     \ifmmode\ifinner % FLEQN hack
      \iftag@
        \addtocounter{equation}{-1} % undo the increment made in the begin part
        $\hfil
           \displaywidth\linewidth\@taggnum\egroup \endtrivlist
        \global\tag@false
        \global\@ignoretrue   
      \else
        $\hfil
           \displaywidth\linewidth\@eqnnum\egroup \endtrivlist
        \global\tag@false
        \global\@ignoretrue 
      \fi
     \else   
      \iftag@
        \addtocounter{equation}{-1} % undo the increment made in the begin part
        \eqno \hbox{\@taggnum}
        \global\tag@false%
        $$\global\@ignoretrue
      \else
        \eqno \hbox{\@eqnnum}% $$ BRACE MATCHING HACK
        $$\global\@ignoretrue
      \fi
     \fi\fi
 } 
 \newif\iftag@ \tag@false
 \def\tag{\@ifnextchar*{\@tagstar}{\@tag}}
 \def\@tag#1{%
     \global\tag@true
     \global\def\@taggnum{(#1)}}
 \def\@tagstar*#1{%
     \global\tag@true
     \global\def\@taggnum{#1}%  
}
\begin{document}

\title{Pierce stalks in preprimal varieties}
\author{D. Vaggione and W. J. Zuluaga Botero}
\maketitle

\begin{abstract}
An algebra $\mathbf{P}$ is called \textit{preprimal} if $\mathbf{P}$ is
finite and $\func{Clo}(\mathbf{P})$ is a maximal clone. A \textit{preprimal
variety} is a variety generated by a preprimal algebra. After Rosenberg's
classification of maximal clones \cite{ro}; we have that a finite algebra is
preprimal if and only if its term operations are exactly the functions
preserving a relation of one of the following seven types:

\begin{enumerate}
\item Permutations with cycles all the same prime length,

\item Proper subsets,

\item Prime-affine relations,

\item Bounded partial orders,

\item $h$-adic relations,

\item Central relations $h\geq 2$,

\item Proper, non-trivial equivalence relations.
\end{enumerate}

In \cite{kn} Knoebel studies the Pierce sheaf of the different preprimal
varieties and he asks for a description of the Pierce stalks. He solves this
problem for the cases 1.,2. and 3. and left open the remaining cases. In
this paper, using central element theory we succeeded in describing the
Pierce stalks of the cases 6. and 7..
\end{abstract}

\section{Introduction}

An algebra $\mathbf{P}$ is called \textit{preprimal} if $\mathbf{P}$ is
finite and $\func{Clo}(\mathbf{P})$ is a maximal clone. A \textit{preprimal
variety} is a variety generated by a preprimal algebra.

After Rosenberg's classification of maximal clones \cite{ro}; we have that a
finite algebra is preprimal if and only if its term operations are exactly
the functions preserving a relation of one of the following seven types:

\begin{enumerate}
\item Permutations with cycles all the same prime length,

\item Proper subsets,

\item Prime-affine relations,

\item Bounded partial orders,

\item $h$-adic relations,

\item Central relations $h\geq 2$,

\item Proper, non-trivial equivalence relations.
\end{enumerate}

In \cite{kn} Knoebel studies the Pierce sheaf (\cite{pi}, \cite{co}) of the
different preprimal varieties and he asks for a description of the Pierce
stalks. He solves this problem for the cases 1.,2. and 3. and left open the
remaining cases.

In this paper, using central element theory we succeeded in describing the
Pierce stalks of the cases 6. and 7..

\bigskip

\section{Notation and some basic results}

Let $\{\mathbf{A}_{i}\}_{i\in I}$ be a family of algebras of the same type.
If there is no place to confusion, we write $\prod \mathbf{A}_{i}$ in place
of $\prod_{i\in I}\mathbf{A}_{i}$. For $x,y\in \prod A_{i}$, let $%
E(x,y)=\left\{ i\in I:x(i)=y(i)\right\} $. A subdirect product $\mathbf{A}%
\leq \prod \mathbf{A}_{i}$ is \textit{global} \cite{kr-cl} if there is a
topology $\tau $ on $I$ such that $E(x,y)\in \tau $, for every $x,y\in A$,
and

\begin{enumerate}
\item[-] (Patchwork Property) For every $\{U_{r}:r\in R\}\subseteq \tau $
and $\{x_{r}:r\in R\}\subseteq A$, if $\bigcup \{U_{r}:r\in R\}=I$ and $%
U_{r}\cap U_{s}\subseteq E(x_{r},x_{s})$, for every $r,s\in R$, then there
exists $x\in A$ such that $U_{r}\subseteq E(x,x_{r})$, for every $r\in R$.
\end{enumerate}

\noindent Global subdirect products were introduced in \cite{kr-cl} as a
universal algebra counterpart to sheaves, where it is proved that a global
subdirect product and an algebra of global sections of a sheaf are the same
thing.

For an algebra $\mathbf{A}$\ we use $\nabla ^{\mathbf{A}}$ to denote the
universal congruence on $\mathbf{A}$ and $\Delta ^{\mathbf{A}}$ to denote
the trivial congruence on $\mathbf{A}$. We use $\mathrm{Con}(\mathbf{A})$ to
denote the congruence lattice of $\mathbf{A}$. We use $\theta ^{\mathbf{A}%
}(a,b)$ to denote the principal congruence of $\mathbf{A}$ generated by $%
(a,b)$. If $\vec{a},\vec{b}\in A^{n}$, we use $\theta ^{\mathbf{A}}(\vec{a},%
\vec{b})$ to denote the congruence $\bigvee_{k=1}^{n}\theta ^{\mathbf{A}%
}(a_{k},b_{k})$.

\begin{lemma}
\label{homomorfismos preservan}If $\sigma :\mathbf{A}\rightarrow \mathbf{B}$
is a homomorphism and $(x,y)\in \theta ^{\mathbf{A}}(\vec{a},\vec{b})$, then 
$(\sigma (x),\sigma (y))\in \theta ^{\mathbf{B}}(\sigma (\vec{a}),\sigma (%
\vec{b}))$.
\end{lemma}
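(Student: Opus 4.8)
The plan is to exploit the fact that the preimage of a congruence under a homomorphism is again a congruence, together with the characterization of $\theta^{\mathbf{A}}(\vec{a},\vec{b})$ as the \emph{smallest} congruence of $\mathbf{A}$ collapsing all the pairs $(a_{k},b_{k})$. Concretely, I would set
\[
\beta=\bigl\{(u,v)\in A^{2}:(\sigma(u),\sigma(v))\in \theta^{\mathbf{B}}(\sigma(\vec{a}),\sigma(\vec{b}))\bigr\},
\]
i.e. $\beta=\sigma^{-1}\bigl(\theta^{\mathbf{B}}(\sigma(\vec{a}),\sigma(\vec{b}))\bigr)$ as a binary relation on $A$.

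First I would check that $\beta\in\mathrm{Con}(\mathbf{A})$. Reflexivity, symmetry and transitivity are immediate from those of $\theta^{\mathbf{B}}(\sigma(\vec{a}),\sigma(\vec{b}))$; compatibility with the basic operations follows because $\sigma$ is a homomorphism, so for any basic (or term) operation $t$ and tuples $\vec{u}\mathbin{\beta}\vec{v}$ componentwise we have $\sigma(t^{\mathbf{A}}(\vec{u}))=t^{\mathbf{B}}(\sigma(\vec{u}))$ and $t^{\mathbf{B}}(\sigma(\vec{u}))\mathbin{\theta^{\mathbf{B}}(\sigma(\vec{a}),\sigma(\vec{b}))}t^{\mathbf{B}}(\sigma(\vec{v}))=\sigma(t^{\mathbf{A}}(\vec{v}))$. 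Hence $\beta$ is a congruence of $\mathbf{A}$.

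Next, for each $k\in\{1,\dots,n\}$ we have $(\sigma(a_{k}),\sigma(b_{k}))\in\theta^{\mathbf{B}}(\sigma(\vec{a}),\sigma(\vec{b}))$ by definition of the latter as a join of principal congruences, so $(a_{k},b_{k})\in\beta$. Since $\beta$ is a congruence containing every pair $(a_{k},b_{k})$, minimality of the principal congruences gives $\theta^{\mathbf{A}}(a_{k},b_{k})\subseteq\beta$ for each $k$, and therefore
\[
\theta^{\mathbf{A}}(\vec{a},\vec{b})=\bigvee_{k=1}^{n}\theta^{\mathbf{A}}(a_{k},b_{k})\subseteq\beta .
\]
Finally, $(x,y)\in\theta^{\mathbf{A}}(\vec{a},\vec{b})\subseteq\beta$ unwinds, by the definition of $\beta$, to exactly $(\sigma(x),\sigma(y))\in\theta^{\mathbf{B}}(\sigma(\vec{a}),\sigma(\vec{b}))$, which is the claim.

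There is essentially no serious obstacle here; the only point requiring a line of care is the verification that $\beta$ is compatible with the operations, and even that is routine given that $\sigma$ is a homomorphism. (One could alternatively argue via a Mal'cev-type term description of principal congruences and induction on the length of a connecting chain, but the preimage-of-a-congruence argument above is shorter and avoids invoking such a presentation.)
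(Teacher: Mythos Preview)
Your argument is correct: pulling back $\theta^{\mathbf{B}}(\sigma(\vec a),\sigma(\vec b))$ along $\sigma$ gives a congruence of $\mathbf{A}$ containing each $(a_k,b_k)$, hence containing $\theta^{\mathbf{A}}(\vec a,\vec b)$, which is exactly what is needed. The paper itself gives no proof at all---it simply records the lemma as ``Folklore''---so your write-up is in fact more detailed than the original; the preimage argument you chose is the standard one and there is nothing further to compare.
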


\begin{proof}
Folklore.
\end{proof}

\bigskip 

If $\mathbf{A}\leq \prod \mathbf{A}_{i}$, for each $i\in I$, let $\pi _{i}^{%
\mathbf{A}}:\mathbf{A}\rightarrow \mathbf{A}_{i}$ be the canonical
projection. We use $\theta _{i}^{\mathbf{A}}$ to denote the congruence $\ker
\pi _{i}^{\mathbf{A}}$.

Given a class $\mathcal{K}$ of algebras, we use $\mathbb{I}(\mathcal{K})$, $%
\mathbb{H}(\mathcal{K})$, $\mathbb{S}(\mathcal{K})$, $\mathbb{P}_{u}(%
\mathcal{K})$ and $\mathbb{V}(\mathcal{K})$ to denote the classes of
isomorphic images, homomorphic images, subalgebras, ultraproducts of
elements of $\mathcal{K}$ and the variety generated by $\mathcal{K}$. For a
variety $\mathcal{V}$, we write $\mathcal{V}_{SI}$ and $\mathcal{V}_{DI}$ to
denote the classes of subdirectly irreducible and directly indecomposable
members of $\mathcal{V}$.

Given a variety $\mathcal{V}$\ and a set $X$ of variables we use $\mathbf{F}%
_{\mathcal{V}}(X)$ for the free algebra of $\mathcal{V}$ freely generated by 
$X$.

\begin{lemma}
\label{technical lemma}Let $\mathcal{V}$ be a variety and let $p_{k}(\vec{z},%
\vec{w}),q_{k}(\vec{z},\vec{w})$, $k=1,...,n$, be terms in the language of $%
\mathcal{V}$. Let $X=\{x,y,\vec{z},\vec{w}\}$,%
\begin{equation*}
\theta =\bigvee_{k=1}^{n}\theta ^{\mathbf{F}_{\mathcal{V}}(X)}(p_{k}^{%
\mathbf{F}_{\mathcal{V}}(X)}(\vec{z},\vec{w}),q_{k}^{\mathbf{F}_{\mathcal{V}%
}(X)}(\vec{z},\vec{w}))
\end{equation*}%
and $\mathbf{H}=\mathbf{F}_{\mathcal{V}}(X)/\theta $. Let $\mathbf{A}\in 
\mathcal{V}$ and suppose that $p_{k}^{\mathbf{A}}(\vec{e},\vec{a})=q_{k}^{%
\mathbf{A}}(\vec{e},\vec{a})$, for $k=1,...,n$. Then, for every $a,b\in A$,
there exists a unique homomorphism $\Omega :\mathbf{H}\rightarrow \mathbf{A}$%
, such that $\Omega (x/\theta )=a$, $\Omega (y/\theta )=b$, $\Omega (\vec{z}%
/\theta )=\vec{e}$ and $\Omega (\vec{w}/\theta )=\vec{a}$.
\end{lemma}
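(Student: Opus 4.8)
The plan is to obtain $\Omega$ by factoring a suitable homomorphism out of $\mathbf{F}_{\mathcal{V}}(X)$ through the natural surjection $\nu\colon \mathbf{F}_{\mathcal{V}}(X)\to \mathbf{H}=\mathbf{F}_{\mathcal{V}}(X)/\theta$. First I would invoke the universal mapping property of the free algebra: since $\mathbf{A}\in\mathcal{V}$ and $X=\{x,y,\vec z,\vec w\}$ freely generates $\mathbf{F}_{\mathcal{V}}(X)$, there is a unique homomorphism $\sigma\colon \mathbf{F}_{\mathcal{V}}(X)\to\mathbf{A}$ with $\sigma(x)=a$, $\sigma(y)=b$, $\sigma(\vec z)=\vec e$ and $\sigma(\vec w)=\vec a$ (read componentwise, with $\vec e,\vec a$ having the lengths of $\vec z,\vec w$ respectively).

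The key step is to show $\theta\subseteq\ker\sigma$. Put $r_k=p_k^{\mathbf{F}_{\mathcal{V}}(X)}(\vec z,\vec w)$ and $s_k=q_k^{\mathbf{F}_{\mathcal{V}}(X)}(\vec z,\vec w)$, so that $\theta=\theta^{\mathbf{F}_{\mathcal{V}}(X)}(\vec r,\vec s)$ with $\vec r=(r_1,\dots,r_n)$, $\vec s=(s_1,\dots,s_n)$. Given $(u,v)\in\theta$, Lemma \ref{homomorfismos preservan} applied to $\sigma$ gives $(\sigma(u),\sigma(v))\in\theta^{\mathbf{A}}(\sigma(\vec r),\sigma(\vec s))$. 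But $\sigma$ being a homomorphism yields $\sigma(r_k)=p_k^{\mathbf{A}}(\vec e,\vec a)$ and $\sigma(s_k)=q_k^{\mathbf{A}}(\vec e,\vec a)$, which are equal by hypothesis; hence $\sigma(\vec r)=\sigma(\vec s)$ componentwise, $\theta^{\mathbf{A}}(\sigma(\vec r),\sigma(\vec s))=\Delta^{\mathbf{A}}$, and therefore $\sigma(u)=\sigma(v)$.

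With $\theta\subseteq\ker\sigma$ in hand, the homomorphism theorem produces a unique homomorphism $\Omega\colon\mathbf{H}\to\mathbf{A}$ with $\Omega\circ\nu=\sigma$; evaluating on the classes of the generators gives $\Omega(x/\theta)=a$, $\Omega(y/\theta)=b$, $\Omega(\vec z/\theta)=\vec e$ and $\Omega(\vec w/\theta)=\vec a$. For uniqueness, I would note that $\{x/\theta,y/\theta,\vec z/\theta,\vec w/\theta\}=\nu(X)$ generates $\mathbf{H}$, so any homomorphism out of $\mathbf{H}$ is completely determined by its values on these classes.

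I do not expect a genuine obstacle: the lemma is essentially a repackaging of the free-algebra universal property together with the first isomorphism theorem. The only things to watch are the bookkeeping — keeping the arities of $\vec z,\vec w$ aligned with $\vec e,\vec a$ and matching generators to their targets — and the mildly convenient use of Lemma \ref{homomorfismos preservan} to get $\theta\subseteq\ker\sigma$ in one line rather than by an induction on the generation of a principal congruence.
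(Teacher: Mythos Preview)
Your argument is correct and is precisely the straightforward proof the paper alludes to (the paper merely says ``The proof is straightforward. The details are left to the reader.''). There is nothing to add; your use of the universal property of $\mathbf{F}_{\mathcal{V}}(X)$ together with the homomorphism theorem, and your justification of $\theta\subseteq\ker\sigma$ via Lemma~\ref{homomorfismos preservan}, is exactly the intended routine verification.
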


\begin{proof}
The proof is straightforward. The details are left to the reader.
\end{proof}

\bigskip 

A variety $\mathcal{V}$ has the \emph{Fraser-Horn property} (\emph{FHP}, for
short) \cite{FH1970} if for every $\mathbf{A}_{1},\mathbf{A}_{2}\in \mathcal{%
V}$, it is the case that every congruence $\theta $ in $\mathbf{A}_{1}\times 
\mathbf{A}_{2}$ is the product congruence $\theta _{1}\times \theta _{2}$
for some congruences $\theta _{1}$ of $\mathbf{A}_{1}$ and $\theta _{2}$ of $%
\mathbf{A}_{2}$.

Let $\mathcal{L}$ be a first order language. If a $\mathcal{L}$-formula $%
\varphi (\vec{x})$ has the form 
\begin{equation*}
{\exists }{\vec{w}}\bigwedge_{j=1}^{n}p_{j}(\vec{x},\vec{w})=q_{j}(\vec{x},%
\vec{w}),
\end{equation*}%
for some positive number $n$ and terms $p_{j}(\vec{x},\vec{w})$ and $q_{j}(%
\vec{x},\vec{w})$ in $\mathcal{L}$, then we say that $\varphi (\vec{x})$ is
a ($\exists \bigwedge p=q$)-\emph{formula}. In a similar manner we define ($%
\forall \bigwedge p=q$) and ($\forall \exists \bigwedge p=q$)-formulas.

Let $\mathcal{L}$ be a first order language and $\mathcal{K}$ be a class of $%
\mathcal{L}$-structures. If $R\in \mathcal{L}$ is an $n$-ary relation
symbol, we say that a formula $\varphi (x_{1},...,x_{n})$ \emph{defines} $R$ 
\emph{in} $\mathcal{K}$ if%
\begin{equation*}
\mathcal{K}\vDash \varphi (\vec{x})\leftrightarrow R(\vec{x})\text{.}
\end{equation*}
If $\mathcal{L}\subseteq \mathcal{L}^{\prime }$ are first order languages,
then for a $\mathcal{L}^{\prime }$-structure $\mathbf{A}$, we use $\mathbf{A}%
_{\mathcal{L}}$ to denote the reduct of $\mathbf{A}$ to the language $%
\mathcal{L}$. If $\vec{a}_{i}=(a_{i1},...,a_{in})\in A_{i}^{n}$ with $1\leq
i\leq m$, then we write $[\vec{a}_{1},...,\vec{a}_{m}]$ to denote the $n$%
-tuple $((a_{11},...,a_{m1}),...,(a_{1n},...,a_{mn}))\in (\prod A_{i})^{n}$.

\begin{lemma}
\label{lema semantico} Let $\mathcal{L}$ be a language of algebras, $R$ be a 
$n$-ary relation symbol and consider $\mathcal{L}^{\prime }=\mathcal{L}\cup
\{R\}$. Let $\mathcal{K}$ be any class of $\mathcal{L}^{\prime }$%
-structures. The following are equivalent:

\begin{enumerate}
\item There is an existential positive $\mathcal{L}$-formula which defines $%
R $ in $\mathcal{K}$.

\item For all $\mathbf{A},\mathbf{B}\in \mathbb{P}_{u}(\mathcal{K})$ and all
homomorphisms $\sigma :\mathbf{A}_{\mathcal{L}}\rightarrow \mathbf{B}_{%
\mathcal{L}}$, we have that $\sigma :\mathbf{A}\rightarrow \mathbf{B}$ is a
homomorphism.
\end{enumerate}

Moreover, if $\mathcal{K}$ is closed under the formation of finite direct
products, then $1.$ and $2.$ are equivalent to

\begin{itemize}
\item[3.] $R$ is definable in $\mathcal{K}$ by a $(\exists \bigwedge p=q)$%
-formula.
\end{itemize}
\end{lemma}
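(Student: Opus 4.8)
The plan is to treat the three implications $1\Rightarrow2$, $2\Rightarrow1$, $3\Rightarrow1$, $1\Rightarrow3$ separately, writing $T=\mathrm{Th}_{\mathcal{L}'}(\mathcal{K})$ for the first‑order $\mathcal{L}'$‑theory of $\mathcal{K}$ and noting at the outset that an existential positive $\mathcal{L}$‑formula defines $R$ in $\mathcal{K}$ iff it defines $R$ in $\mathbb{P}_{u}(\mathcal{K})$ iff it defines $R$ in every model of $T$, since the defining biconditional $\forall\vec{x}\,(\varphi(\vec{x})\leftrightarrow R(\vec{x}))$ is a single first‑order sentence.

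For $1\Rightarrow2$ (routine): if the existential positive formula $\varphi$ defines $R$ in $\mathcal{K}$ it defines $R$ in $\mathbb{P}_{u}(\mathcal{K})$; given $\mathbf{A},\mathbf{B}\in\mathbb{P}_{u}(\mathcal{K})$, an $\mathcal{L}$‑homomorphism $\sigma$, and $\vec{a}\in R^{\mathbf{A}}$, we get $\mathbf{A}\vDash\varphi(\vec{a})$, hence $\mathbf{B}\vDash\varphi(\sigma\vec{a})$ because existential positive formulas are preserved under homomorphisms, hence $\sigma\vec{a}\in R^{\mathbf{B}}$.

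For $2\Rightarrow1$ I would first upgrade 2 to the statement $(2')$: \emph{$R(\vec{x})$ is preserved by every $\mathcal{L}$‑homomorphism between models of $T$}. This uses that every model of $T$ is elementarily equivalent to a member of $\mathbb{P}_{u}(\mathcal{K})$ (ultraproduct of $\mathcal{K}$‑members indexed by the finite subsets of its complete theory), plus the fact that an ultraproduct of ultraproducts of $\mathcal{K}$ is again, up to isomorphism, an ultraproduct of $\mathcal{K}$ — so a hypothetical $R$‑violating $\mathcal{L}$‑homomorphism between two models of $T$ can be transported, via a common ultrapower (uniform Keisler–Shelah), to one between members of $\mathbb{P}_{u}(\mathcal{K})$. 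Granting $(2')$, let $\Sigma(\vec{x})$ be the set of all existential positive $\mathcal{L}$‑formulas $\psi(\vec{x})$ with $T\vDash R(\vec{x})\to\psi(\vec{x})$; I claim $T\cup\Sigma(\vec{c})\vDash R(\vec{c})$, which by compactness, together with the trivial $T\vDash R(\vec{x})\to\bigwedge\Sigma(\vec{x})$, produces a finite subconjunction of $\Sigma$ that defines $R$, completing the proof. To prove the claim, suppose $\mathbf{M}\vDash T\cup\Sigma(\vec{m})\cup\{\neg R(\vec{m})\}$. Since the existential positive $\mathcal{L}$‑type of $\vec{m}$ in $\mathbf{M}$ lies inside $\Sigma$, for any finite list $\psi_{1},\dots,\psi_{\ell}$ of existential positive $\mathcal{L}$‑formulas not holding of $\vec{m}$ their disjunction is existential positive and not in $\Sigma$, so $T\cup\{R(\vec{c}),\neg\psi_{1}(\vec{c}),\dots,\neg\psi_{\ell}(\vec{c})\}$ is consistent; by compactness there is $\mathbf{N}\vDash T$ with an $R$‑tuple $\vec{n}$ whose existential positive $\mathcal{L}$‑type is contained in that of $\vec{m}$. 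Then $\mathrm{ElDiag}(\mathbf{M},\vec{m})$ together with the positive atomic $\mathcal{L}$‑diagram of $\mathbf{N}$ (naming $\vec{n}$ by $\vec{m}$) is finitely satisfiable — each finite piece of the diagram asserts a primitive positive fact about $\vec{n}$, which therefore holds of $\vec{m}$ — so it has a model $\mathbf{M}'\succeq\mathbf{M}$ carrying an $\mathcal{L}$‑homomorphism $h:\mathbf{N}_{\mathcal{L}}\to\mathbf{M}'_{\mathcal{L}}$ with $h(\vec{n})=\vec{m}$. As $\mathbf{N},\mathbf{M}'\vDash T$, $R^{\mathbf{N}}(\vec{n})$ and $\neg R^{\mathbf{M}'}(\vec{m})$, this contradicts $(2')$.

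Implication $3\Rightarrow1$ is immediate, a $(\exists\bigwedge p=q)$‑formula being existential positive. For $1\Rightarrow3$, assuming $\mathcal{K}$ closed under finite direct products and setting aside the degenerate case $\mathcal{K}\vDash\forall\vec{x}\,\neg R(\vec{x})$, I would write an existential positive formula defining $R$ as a disjunction $\bigvee_{i=1}^{m}\chi_{i}$ of primitive positive formulas, discarding any $\chi_{i}$ unsatisfiable in $\mathcal{K}$. For any finite list of pairs $(\mathbf{A}_{s},\vec{a}_{s})$ with $\mathbf{A}_{s}\in\mathcal{K}$ and $R^{\mathbf{A}_{s}}(\vec{a}_{s})$, the product $\prod_{s}\mathbf{A}_{s}\in\mathcal{K}$ satisfies $R$, hence some $\chi_{i}$, at the combined tuple, and $\chi_{i}$ being primitive positive it is carried by the projections, so $\mathbf{A}_{s}\vDash\chi_{i}(\vec{a}_{s})$ for every $s$. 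Thus the nonempty subsets $\{i\le m:\mathbf{A}\vDash\chi_{i}(\vec{a})\}$ of $\{1,\dots,m\}$, over all $R$‑tuples $(\mathbf{A},\vec{a})$ in $\mathcal{K}$, have the finite intersection property; finiteness of $\{1,\dots,m\}$ gives a common index $i_{0}$, and then $\chi_{i_{0}}$ defines $R$ in $\mathcal{K}$.

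The main obstacle is entirely in $2\Rightarrow1$: the passage from condition 2 (homomorphisms between members of $\mathbb{P}_{u}(\mathcal{K})$) to $(2')$ (homomorphisms between arbitrary models of $T$), and the construction of $h$ into a saturated‑enough elementary extension of $\mathbf{M}$. These steps require standard but nontrivial model‑theoretic machinery (the diagram lemma, saturation of ultraproducts, Keisler–Shelah, idempotency of $\mathbb{P}_{u}$ up to isomorphism); by contrast the easy direction, the compactness bookkeeping with $\Sigma$, and the product/finite‑intersection argument for condition 3 are routine.
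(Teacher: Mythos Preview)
Your proposal is correct. The paper does not actually prove $1\Leftrightarrow2$ but simply cites \cite{ca-va3}; your self-contained argument via the upgrade to $(2')$ using iterated ultraproducts and Keisler--Shelah, followed by the compactness/diagram construction of the homomorphism $h$, is the standard route and is sound as sketched. For $1\Rightarrow3$ your finite-intersection-property argument on the disjuncts $\chi_i$ is logically the same as the paper's: the paper phrases it as a direct contradiction (if no $\psi_j$ defines $R$, pick a witness $(\mathbf{A}_j,\vec{e}_j)$ for each $j$, form the product, and observe that some $\psi_j$ must hold at $[\vec{e}_1,\ldots,\vec{e}_m]$, hence at $\vec{e}_j$), which is exactly your FIP observation unwound. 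The only substantive difference is that you supply the model theory behind $1\Leftrightarrow2$ rather than outsourcing it; this buys self-containment at the cost of invoking heavier machinery (uniform Keisler--Shelah, idempotency of $\mathbb{P}_u$) that the paper avoids by citation.
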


\begin{proof}
The proof of the equivalence of $1.$ and $2.$ can be obtained from $(3)$ of
Theorem 6.2 in \cite{ca-va3}. Of course $3.$ implies $1.$. We will show that 
$1.$ implies $3.$. Suppose%
\begin{equation*}
\phi (\vec{z})=\exists \vec{w}\bigvee_{j=1}^{m}\bigwedge_{k=1}^{n_{j}}p_{jk}(%
\vec{z},\vec{w})=q_{jk}(\vec{z},\vec{w})
\end{equation*}%
defines $R$ in $\mathcal{K}$. We will prove that for some $j\in \{1,...,m\}$%
, the formula $\psi _{j}(\vec{z})=\exists \vec{w}%
\bigwedge_{k=1}^{n_{j}}p_{k}(\vec{z},\vec{w})=q_{k}(\vec{z},\vec{w})$
defines $R$ in $\mathcal{K}$. We proceed by contradiction. So suppose that
no formula $\psi _{j}(\vec{z})$ defines $R$ in $\mathcal{K}$. Then for every 
$j$ there exists $\mathbf{A}_{j}\in \mathcal{K}$ and $\vec{e}_{j}\in R^{%
\mathbf{A}_{j}}$, such that $\mathbf{A}_{j}\models \lnot \psi _{j}(\vec{e}%
_{j})$. Let $\vec{e}=[\vec{e}_{1},...,\vec{e}_{m}]$. Since $\vec{e}\in
R^{\Pi \mathbf{A}_{j}}$ and $\prod \mathbf{A}_{j}\in \mathcal{K}$, we have
that $\prod \mathbf{A}_{j}\models \phi (\vec{e})$. Naturally, this says that 
$\prod \mathbf{A}_{j}\models \psi _{j}(\vec{e})$, for some $j$ and hence $%
\mathbf{A}_{j}\models \psi _{j}(\vec{e}_{j})$, for some $j$, which produces
a contradiction.
\end{proof}

\bigskip

\section{Central elements}

In a universal-algebraic setting, one key concept for the study of the
Pierce sheaf is that of central element. This tool can be developed
fruitfully in varieties with $\vec{0}$ and $\vec{1}$, which we now define.

A \textit{variety with} $\vec{0}$ \textit{and} $\vec{1}$ is a variety $%
\mathcal{V}$ in which there are 0-ary terms $0_{1},\ldots ,0_{N},$ $%
1_{1},\ldots ,1_{N}$ such that $\mathcal{V}\vDash \vec{0}=\vec{1}\rightarrow
x=y$, where $\vec{0}=(0_{1},\ldots ,0_{N})$ and $\vec{1}=(1_{1},\ldots
,1_{N})$. The terms $\vec{0}$ and $\vec{1}$ are analogue, in a rather
general manner, to identity (top) and null (bottom) elements in rings
(lattices), and its existence in a variety, when the language has at least a
constant symbol, is equivalent to the fact that no non-trivial algebra in
the variety has a trivial subalgebra (see Proposition 2.3 of \cite{ca-va0}).

If $\mathbf{A}\in \mathcal{V}$, then we say that $\vec{e}\in A^{N}$ is a 
\textit{central element} of $\mathbf{A}$ if there exists an isomorphism $%
\mathbf{A}\rightarrow \mathbf{A}_{1}\times \mathbf{A}_{2}$ such that $\vec{e}%
\rightarrow \lbrack \vec{0},\vec{1}]$. We use $Z(\mathbf{A})$ to denote the
set of central elements of $\mathbf{A}$.

Central elements are a generalization of both central idempotent elements in
rings with identity and neutral complemented elements in bounded lattices.
In these classical cases it is well known that the central elements
concentrate the information concerning the direct product representations.
This happens when $\mathcal{V}$ has the Fraser-Horn property \cite{va7}%
\footnote{%
In \cite{sate-va} it is solved the problem of characterizing those varieties
with $\vec{0}$ and $\vec{1}$ in which central elements determine the direct
product representations. These are the varieties with definable factor
congruences or equivalently, the varieties with Boolean factor congruences.
See \cite{ba-va} for a non constructive short proof of this fact.}. It is
well known that the set of factor congruences of an algebra $\mathbf{A}$ in
a variety with the Fraser-Horn property forms a Boolean algebra $FC(\mathbf{A%
})$ which is a sublattice of $\mathrm{Con}(\mathbf{A})$ (see \cite{bi-bu}).
In \cite{va7} it is proved that if $\mathcal{V}$ has the Fraser-Horn
property, then for $\mathbf{A}\in \mathcal{V}$, the map%
\begin{equation*}
\begin{array}{rcl}
\lambda :FC(\mathbf{A}) & \rightarrow  & Z(\mathbf{A}) \\ 
\theta  & \rightarrow  & 
\begin{array}[t]{l}
\mathrm{unique\ }\vec{e}\in A^{N}\ \mathrm{such\ that} \\ 
\mathrm{\ \ \ \ \ \ \ \ \ }\vec{e}\equiv \vec{0}(\theta )\ \mathrm{and\ }%
\vec{e}\equiv \vec{1}(\theta ^{\ast })\QQfnmark{%
We write $\vec{a}\equiv \vec{b}(\theta )$ to express that $(a_{i},b_{i})\in
\theta ,$ $i=1,...,N$.}%
\end{array}%
\end{array}%
\QQfntext{0}{
We write $\vec{a}\equiv \vec{b}(\theta )$ to express that $(a_{i},b_{i})\in
\theta ,$ $i=1,...,N$.}
\end{equation*}%
(where $\theta ^{\ast }$ is the complement of $\theta $ in $FC(\mathbf{A})$)
is bijective. Thus via the above bijection we can give to $Z(\mathbf{A})$ a
Boolean algebra structure. We shall denote by $\mathbf{Z}(\mathbf{A})$ this
Boolean algebra. Many of the usual properties of central elements in rings
with identity or bounded lattices hold when $\mathcal{V}$ has the FHP. We
say that a set of first order formulas $\{\varphi _{r}(\vec{z}):r\in R\}$ 
\textit{defines the property} $``\vec{e}\in Z(\mathbf{A})"$ \textit{in} $%
\mathcal{V}$ if for every $\mathbf{A}\in \mathcal{K}$ and $\vec{e}\in A^{N}$%
, we have that $\vec{e}\in Z(\mathbf{A})$ iff $\mathbf{A}\vDash \varphi _{r}(%
\vec{e})$, for every $r\in R$.

In \cite{va7} it is proved

\begin{lemma}
\label{Lema FHP} Let $\mathcal{V}$ be a variety with $\vec{0}$ and $\vec{1}$
with the FHP. Let $\mathcal{L}$ be the language of $\mathcal{V}$. Then,
there is a finite set $\Sigma _{0}$ of $(\forall \exists \bigwedge p=q)$%
-formulas in the variables $z_{1},...,z_{N}$ and $(\forall \bigwedge p=q)$%
-formulas%
\begin{equation*}
F\text{-}PRES(z_{1},...,z_{N})\text{, with }F\in \mathcal{L}
\end{equation*}%
such that $\Sigma _{0}\cup \{F$-$PRES(\vec{z}):F\in \mathcal{L}\}$ defines
the property $``\vec{e}\in Z(\mathbf{A})"$.
\end{lemma}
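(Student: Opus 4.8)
The plan is to first recast ``$\vec e\in Z(\mathbf A)$'' as a statement about two principal congruences canonically attached to $\vec e$, and then to show that this statement is first-order definable of the stated shape by a compactness argument resting on Lemma~\ref{technical lemma} and the Fraser--Horn property.

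For the recasting I would prove: $\vec e\in Z(\mathbf A)$ iff $\theta_0:=\theta^{\mathbf A}(\vec 0,\vec e)$ and $\theta_1:=\theta^{\mathbf A}(\vec e,\vec 1)$ form a complementary pair of factor congruences. ``If'' is immediate, since the canonical isomorphism $\mathbf A\to\mathbf A/\theta_0\times\mathbf A/\theta_1$ sends $\vec e$ to $[\vec 0/\theta_0,\vec 1/\theta_1]$ (using $(\vec 0,\vec e)\in\theta_0$ and $(\vec e,\vec 1)\in\theta_1$). For ``only if'', fix an isomorphism $\phi:\mathbf A\to\mathbf A_1\times\mathbf A_2$ with $\phi(\vec e)=[\vec 0,\vec 1]$; then $\phi(\theta_0)=\theta^{\mathbf A_1\times\mathbf A_2}([\vec 0,\vec 0],[\vec 0,\vec 1])$, which by the FHP is a product congruence, and this product congruence must be $\Delta_{\mathbf A_1}\times\nabla_{\mathbf A_2}=\ker\pi_1$ because $\mathbf A_2/\theta^{\mathbf A_2}(\vec 0,\vec 1)$ is a model of $\vec 0=\vec 1$ and hence trivial; symmetrically $\phi(\theta_1)=\ker\pi_2$, so $(\theta_0,\theta_1)$ is the complementary factor pair. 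Since moreover $\theta_0\vee\theta_1=\nabla$ holds automatically ($\mathbf A/(\theta_0\vee\theta_1)\vDash\vec 0=\vec 1$) and $\theta_0\circ\theta_1=\nabla$ already yields $\theta_1\circ\theta_0=\nabla$, the equivalence reduces to: $\vec e\in Z(\mathbf A)$ iff $\theta_0\cap\theta_1=\Delta$ and $\theta_0\circ\theta_1=\nabla$.

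It remains to turn ``$\theta_0\cap\theta_1=\Delta$ and $\theta_0\circ\theta_1=\nabla$'' into formulas of the prescribed kinds. The condition $\theta_0\circ\theta_1=\nabla$ says that for all $a,b$ there is $c$ with $(a,c)\in\theta^{\mathbf A}(\vec 0,\vec e)$ and $(c,b)\in\theta^{\mathbf A}(\vec e,\vec 1)$, and each membership is in turn witnessed by finitely many auxiliary elements satisfying a finite system of term-equations (a Mal'cev-type chain). The key point is that, over the whole variety, the number of auxiliary elements and the chain length can be bounded uniformly in $\mathbf A$; granting this, the condition becomes finitely many $(\forall\exists\bigwedge p=q)$-formulas in $z_1,\dots,z_N$, which I would put into $\Sigma_0$. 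Lemma~\ref{technical lemma} is the bridge here: validity of a fixed finite system of term-equations at $\vec e$ (together with chosen elements) is equivalent to the existence of a homomorphism from a fixed finitely presented algebra onto $\mathbf A$, which is precisely what lets one recover a genuine direct decomposition from the formulas, and, read backwards, makes it routine that a true central element satisfies them. What is left --- the requirement that the decomposition datum determined by $\vec z$ honestly respect the basic operations, which is what upgrades ``$\theta_0\circ\theta_1=\nabla$'' to ``$(\theta_0,\theta_1)$ is a factor pair'' and in particular forces $\theta_0\cap\theta_1=\Delta$ --- is recorded, one operation symbol at a time, by the formulas $F\text{-}PRES(\vec z)$: each is a (conjunction of) universally quantified equation(s) saying that the terms computing that datum commute with $F$ applied coordinatewise, hence a $(\forall\bigwedge p=q)$-formula, one for each $F\in\mathcal L$. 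One then checks that $\Sigma_0\cup\{F\text{-}PRES(\vec z):F\in\mathcal L\}$ holds at $\vec e$ exactly when $(\theta_0,\theta_1)$ is a complementary pair of factor congruences, i.e. exactly when $\vec e\in Z(\mathbf A)$.

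I expect the main obstacle to be exactly the uniform bound in the previous paragraph: proving that the auxiliary witnesses and chain lengths needed to realize $\theta_0\circ\theta_1=\nabla$ can be drawn from a finite stock independent of $\mathbf A\in\mathcal V$, while keeping the ``existence of witnessing data'' content ($\Sigma_0$, of shape $\forall\exists\bigwedge p=q$) cleanly separated from the ``operation-compatibility'' content (the $F\text{-}PRES$'s, of shape $\forall\bigwedge p=q$). This is the step in which the Fraser--Horn property --- and not merely the presence of $\vec 0$ and $\vec 1$ --- does real work: FHP forces the pertinent congruence on a suitable generic algebra (a quotient of a free algebra, or a product of free algebras, in which the candidate $\vec z$ has been made central) to be a product congruence and hence finitely controlled, so that a compactness argument over $\mathbb P_u(\mathcal V)$ caps the chain lengths. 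The converse implication --- a genuine central element satisfies every formula produced --- is then routine, each formula having been constructed so as to hold of $[\vec 0,\vec 1]$ in an arbitrary direct product.
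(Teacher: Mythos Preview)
The paper does not give its own proof of this lemma; it simply cites \cite{va7}. So there is no in-paper argument to compare against directly, and the question becomes whether your sketch stands on its own.

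Your recasting of centrality via the pair $\theta_0=\theta^{\mathbf A}(\vec 0,\vec e)$, $\theta_1=\theta^{\mathbf A}(\vec e,\vec 1)$ is correct and is exactly Lemma~\ref{Centrals in FHP} (also cited from \cite{va7}). You are also right that FHP is what produces the uniform bound on Mal'cev chains, but the mechanism is more concrete than a compactness argument over $\mathbb P_u(\mathcal V)$: work in $\mathbf F\times\mathbf F$ with $\mathbf F=\mathbf F_{\mathcal V}(x,y)$. By FHP the principal congruence $\theta^{\mathbf F\times\mathbf F}(\vec 0,[\vec 0,\vec 1])$ is a product congruence, and one checks it equals $\ker\pi_1$; hence $((x,x),(x,y))$ lies in it and a single Mal'cev chain of fixed length in this one algebra yields the required $(\exists\bigwedge p=q)$-formula $\varepsilon(x,y,\vec z)$ uniformly for the whole variety. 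No compactness is needed.

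The genuine gap is the converse direction. You assert that the $F\text{-}PRES$ formulas ``upgrade $\theta_0\circ\theta_1=\nabla$ to $(\theta_0,\theta_1)$ is a factor pair and in particular force $\theta_0\cap\theta_1=\Delta$'', but you give no argument, and as stated this cannot be right: $\theta_0,\theta_1$ are already congruences, so compatibility with the operations adds nothing to them, and $\theta_0\cap\theta_1=\Delta$ is a negative condition that no conjunction of $(\forall\exists\bigwedge p=q)$ and $(\forall\bigwedge p=q)$ formulas can express directly. (For instance, in bounded lattices take $M_3$ and $e$ an atom: then $\theta_0=\theta_1=\nabla$, so $\theta_0\circ\theta_1=\nabla$ yet $e$ is not central.) What actually happens in \cite{va7} is that the $F\text{-}PRES$ formulas are not about $\theta_0,\theta_1$ at all but about the \emph{definable} relation $\varepsilon(\_,\_,\vec e)$, and the whole package $\Sigma_0\cup\{F\text{-}PRES\}$ is engineered so that its satisfaction forces $\varepsilon(\_,\_,\vec e)$ itself to be a factor congruence with a specified complement --- the trivial-intersection condition is not postulated but \emph{derived} from the explicit shape of $\varepsilon$. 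Your proposal conflates the definable relation with the principal congruence it approximates, and this is why the final ``one then checks'' step does not go through. A second, related issue: if your ``decomposition datum'' is specified only via existential witnesses (as your $\Sigma_0$ suggests), then ``commutes with $F$'' is of shape $\forall\bigl((\exists\ldots)\to(\exists\ldots)\bigr)$, not $(\forall\bigwedge p=q)$; to get the stated quantifier shape you must show the witnesses can be taken to be \emph{terms} in $x,y,\vec z$, which is precisely the content you have not supplied.
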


\bigskip

Also in \cite{va7} it is proved that there is a $(\exists \bigwedge p=q)$%
-formula $\varepsilon (x,y,\vec{z})$ such that for all $\mathbf{A},\mathbf{B}%
\in $ $\mathcal{V}$,%
\begin{equation*}
\mathbf{A}\times \mathbf{B}\vDash \varepsilon ((a,b),(c,d),[\vec{0},\vec{1}])%
\text{ if and only if }a=c\text{.}
\end{equation*}%
The formula $\varepsilon (\_,\_,\vec{e})$ defines the factor congruence
associated (via the map $\lambda ^{-1}$) with the central element $\vec{e}$.
We stress that the existence of $\varepsilon (x,y,\vec{z})$ and the set $%
\Sigma _{0}\cup \{F$-$PRES(\vec{z}):F\in \mathcal{L}\}$ imply that the
central elements (and its Boolean algebra structure) are preserved by
surjective homomorphisms and products. That is to say:

\begin{lemma}
\label{In FHP surjections give homomorphisms} Let $\mathcal{V}$ be a variety
with $\vec{0}$ and $\vec{1}$ with the FHP and let $\mathbf{A},\mathbf{B}\in 
\mathcal{V}$. If $f:\mathbf{A}\rightarrow \mathbf{B}$ is a surjective
homomorphism, then the map $Z(\mathbf{A})\rightarrow Z(\mathbf{B})$ defined
by $\vec{e}\mapsto (f(e_{1}),...,f(e_{N}))$, is a homomorphism from $\mathbf{%
Z}(\mathbf{A})$ to $\mathbf{Z}(\mathbf{B})$.
\end{lemma}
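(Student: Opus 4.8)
The plan is to read the statement off from the behaviour of direct decompositions under the Fraser--Horn property, using Lemma~\ref{Lema FHP} (equivalently, the description of $FC(\mathbf{A})$ recalled above) as the only real input. First I would check that the map is well defined, i.e.\ that $f(\vec{e}):=(f(e_{1}),\ldots ,f(e_{N}))\in Z(\mathbf{B})$ whenever $\vec{e}\in Z(\mathbf{A})$. Fix an isomorphism $g\colon \mathbf{A}\to \mathbf{A}_{1}\times \mathbf{A}_{2}$ with $g(\vec{e})=[\vec{0},\vec{1}]$. Then $f\circ g^{-1}\colon \mathbf{A}_{1}\times \mathbf{A}_{2}\to \mathbf{B}$ is a surjective homomorphism, so by the FHP its kernel is a product congruence $\psi _{1}\times \psi _{2}$ with $\psi _{i}\in \mathrm{Con}(\mathbf{A}_{i})$, and the first isomorphism theorem gives an isomorphism $\mathbf{B}\cong (\mathbf{A}_{1}\times \mathbf{A}_{2})/(\psi _{1}\times \psi _{2})\cong (\mathbf{A}_{1}/\psi _{1})\times (\mathbf{A}_{2}/\psi _{2})$ under which $f(\vec{e})$ is carried to $[\vec{0},\vec{1}]$, since $f$ preserves the nullary terms $0_{j}$ and $1_{j}$; hence $f(\vec{e})\in Z(\mathbf{B})$. (One can also argue without choosing $g$: by Lemma~\ref{Lema FHP} the property ``$\vec{e}\in Z(\mathbf{A})$'' is axiomatised by $(\forall \exists \bigwedge p=q)$- and $(\forall \bigwedge p=q)$-formulas in $z_{1},\ldots ,z_{N}$, and any formula of either shape is preserved on passing along a surjective homomorphism: instantiate the universal quantifiers in $\mathbf{B}$, lift the instances to $\mathbf{A}$ using surjectivity of $f$, read off the existential witnesses and the equalities in $\mathbf{A}$, and transport them back along $f$.)

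Next, that $\vec{0}^{\mathbf{A}}$ and $\vec{1}^{\mathbf{A}}$ are sent to $\vec{0}^{\mathbf{B}}$ and $\vec{1}^{\mathbf{B}}$, the bottom and top of $\mathbf{Z}(\mathbf{B})$, is immediate, as these are tuples of nullary terms. For complementation I would reuse the decomposition above: the complement of $[\vec{0},\vec{1}]$ in $\mathbf{Z}(\mathbf{A}_{1}\times \mathbf{A}_{2})$ is $[\vec{1},\vec{0}]$ --- a direct computation with $\lambda $, since $\lambda ^{-1}([\vec{0},\vec{1}])=\ker \pi _{1}$ has complement $\ker \pi _{2}$ and $\lambda (\ker \pi _{2})=[\vec{1},\vec{0}]$ --- and $g(\vec{e}^{\,\ast })=[\vec{1},\vec{0}]$; pushing this through $\mathbf{B}\cong (\mathbf{A}_{1}/\psi _{1})\times (\mathbf{A}_{2}/\psi _{2})$ gives $f(\vec{e}^{\,\ast })=[\vec{1},\vec{0}]=f(\vec{e})^{\ast }$.

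The crux is preservation of the binary operations; it is enough to treat the join, the meet following by De~Morgan. Given $\vec{e},\vec{d}\in Z(\mathbf{A})$, the factor congruences $\theta _{\vec{e}}=\lambda ^{-1}(\vec{e})$ and $\theta _{\vec{d}}=\lambda ^{-1}(\vec{d})$ lie in $FC(\mathbf{A})$, which is a Boolean algebra of pairwise permuting congruences (\cite{bi-bu}); hence the finite subalgebra they generate induces an internal direct decomposition $\mathbf{A}\cong \mathbf{C}_{1}\times \mathbf{C}_{2}\times \mathbf{C}_{3}\times \mathbf{C}_{4}$ with respect to which $\vec{e}=[\vec{0},\vec{0},\vec{1},\vec{1}]$, $\vec{d}=[\vec{0},\vec{1},\vec{0},\vec{1}]$ and, consequently, $\vec{e}\vee \vec{d}=[\vec{0},\vec{1},\vec{1},\vec{1}]$ (again a computation with $\lambda $, using $\ker \pi _{\{1,2\}}\vee \ker \pi _{\{1,3\}}=\ker \pi _{\{1\}}$, where $\pi _{S}$ is the projection onto the factors in $S$). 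Applying the FHP repeatedly, the kernel of $f$ transported to $\prod _{i}\mathbf{C}_{i}$ is a product congruence $\prod _{i}\psi _{i}$, so $f$ induces an isomorphism $\mathbf{B}\cong \prod _{i}(\mathbf{C}_{i}/\psi _{i})$ carrying $f(\vec{e})$, $f(\vec{d})$, $f(\vec{e}\vee \vec{d})$ to $[\vec{0},\vec{0},\vec{1},\vec{1}]$, $[\vec{0},\vec{1},\vec{0},\vec{1}]$, $[\vec{0},\vec{1},\vec{1},\vec{1}]$; and the identical computation performed in $\prod _{i}(\mathbf{C}_{i}/\psi _{i})$ gives $f(\vec{e})\vee f(\vec{d})=[\vec{0},\vec{1},\vec{1},\vec{1}]$. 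Hence $f(\vec{e}\vee \vec{d})=f(\vec{e})\vee f(\vec{d})$, and with the previous two steps this shows $\vec{e}\mapsto f(\vec{e})$ is a homomorphism $\mathbf{Z}(\mathbf{A})\to \mathbf{Z}(\mathbf{B})$ of Boolean algebras.

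I expect the one genuinely delicate point to be precisely this last step: unwound, it is the assertion that the induced map on factor congruences $\theta \mapsto (\theta \vee \ker f)/\ker f$, from $FC(\mathbf{A})$ to $FC(\mathbf{B})$ --- which corresponds to $\vec{e}\mapsto f(\vec{e})$ under the bijections $\lambda $ --- is a Boolean homomorphism. The simultaneous four-fold decomposition packages exactly this, and its legitimacy, together with the fact that $\ker f$ splits along the four factors, rests on the standard theory of Boolean algebras of pairwise permuting factor congruences and on iterated use of the FHP for binary products. A reader preferring to stay inside the model-theoretic framework of the preceding section may instead appeal to the formulas of \cite{va7} describing the Boolean operations on central elements together with the same instantiate--lift--transport argument.
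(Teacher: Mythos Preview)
Your argument is correct. The paper does not give a self-contained proof here: it simply points to \cite[Lemma~4(a)]{va4}, and the sentence immediately preceding the lemma indicates that the intended mechanism is the preservation under surjections of the $(\forall\exists\bigwedge p=q)$- and $(\forall\bigwedge p=q)$-formulas of Lemma~\ref{Lema FHP} together with the $(\exists\bigwedge p=q)$-formula $\varepsilon(x,y,\vec z)$ that defines the factor congruence attached to a central element. That is exactly the syntactic route you mention parenthetically and again in your closing paragraph.

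Your main line of argument, by contrast, is purely algebraic: you exploit the FHP directly, first on an explicit two-fold decomposition (for well-definedness and complementation) and then on a simultaneous four-fold decomposition (for the join). This avoids the formulas of \cite{va7} altogether, needing only the definition of the FHP and the standard fact from \cite{bi-bu}, already invoked elsewhere in the paper, that factor congruences in an FHP variety form a Boolean sublattice of pairwise permuting congruences, so that finitely many of them refine to a common direct decomposition. The trade-off is that the syntactic approach dispatches well-definedness and all Boolean operations in one uniform stroke---every relevant predicate has the right positive quantifier shape and hence passes along surjections---whereas your argument handles each operation separately but makes the role of the FHP completely transparent at every step. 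Both are sound.
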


\begin{lemma}
\label{Products Center FHP} Let $\mathcal{V}$ be a variety with $\vec{0}$
and $\vec{1}$ with the FHP, $\{\mathbf{A}_{i}\}_{i\in I}$ be a family of
members of $\mathcal{V}$ and $\vec{e} \in (\prod \mathbf{A}_{i})^{N}$. Then, 
$\vec{e}\in Z(\prod \mathbf{A}_{i})$ if and only if $\vec{e}(i)\in Z(\mathbf{%
A}_{i})$ for every $i\in I$. Moreover, $\mathbf{Z}(\prod \mathbf{A}_{i})$ is
naturally isomorphic to $\prod \mathbf{Z}(\mathbf{A}_{i})$.
\end{lemma}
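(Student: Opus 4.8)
The plan is to derive both directions of the equivalence from the machinery already set up, after which the ``moreover'' part drops out as a formal consequence; the argument is essentially a bookkeeping exercise, the only point needing a line of genuine thought being a standard model-theoretic preservation fact.

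For the forward implication, I would observe that for each $j\in I$ the $j$-th projection $\pi_{j}:\prod\mathbf{A}_{i}\rightarrow\mathbf{A}_{j}$ is a surjective homomorphism, so Lemma~\ref{In FHP surjections give homomorphisms} applies verbatim: the map $\vec{e}=(e_{1},\dots,e_{N})\mapsto(\pi_{j}(e_{1}),\dots,\pi_{j}(e_{N}))=\vec{e}(j)$ is a Boolean homomorphism $\mathbf{Z}(\prod\mathbf{A}_{i})\rightarrow\mathbf{Z}(\mathbf{A}_{j})$. In particular it takes values in $Z(\mathbf{A}_{j})$, so $\vec{e}\in Z(\prod\mathbf{A}_{i})$ forces $\vec{e}(j)\in Z(\mathbf{A}_{j})$ for every $j\in I$.

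For the reverse implication I would invoke Lemma~\ref{Lema FHP}: there is a set $\Sigma_{0}\cup\{F\text{-}PRES(\vec{z}):F\in\mathcal{L}\}$ of $(\forall\exists\bigwedge p=q)$- and $(\forall\bigwedge p=q)$-formulas that defines the property ``$\vec{e}\in Z(\mathbf{A})$'' in $\mathcal{V}$. The key (routine) observation is that each of these two syntactic shapes is preserved under arbitrary direct products: a universally quantified conjunction of equations holds in $\prod\mathbf{A}_{i}$ as soon as it holds in every coordinate, and for a $(\forall\exists\bigwedge p=q)$-formula one uses the axiom of choice to splice, for any given tuple of $\prod\mathbf{A}_{i}$, a coordinatewise family of witnesses into a single witness in $\prod\mathbf{A}_{i}$ (each such witness verifies the equations coordinatewise, hence in the product). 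Consequently, if $\vec{e}(i)\in Z(\mathbf{A}_{i})$ for every $i$, then every $\mathbf{A}_{i}$ satisfies every formula of $\Sigma_{0}\cup\{F\text{-}PRES(\vec{z}):F\in\mathcal{L}\}$ at $\vec{e}(i)$, so $\prod\mathbf{A}_{i}$ satisfies all of them at $\vec{e}$, i.e.\ $\vec{e}\in Z(\prod\mathbf{A}_{i})$. This preservation step for the $\forall\exists$-formulas is the only place where anything beyond pure bookkeeping is used, and it is a standard fact.

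Finally, for the ``moreover'' part, consider the map $\Phi:\mathbf{Z}(\prod\mathbf{A}_{i})\rightarrow\prod\mathbf{Z}(\mathbf{A}_{i})$ whose $i$-th coordinate sends $\vec{e}$ to $\vec{e}(i)$. Each coordinate map is exactly the Boolean homomorphism produced by Lemma~\ref{In FHP surjections give homomorphisms} for $\pi_{i}$, so $\Phi$ is a Boolean homomorphism into the product. It is injective since an element of a product is determined by its coordinates, and it is surjective by the reverse implication just established: given $(\vec{g}_{i})_{i\in I}\in\prod Z(\mathbf{A}_{i})$, the tuple $\vec{e}\in(\prod A_{i})^{N}$ with $\vec{e}(i)=\vec{g}_{i}$ satisfies $\vec{e}(i)\in Z(\mathbf{A}_{i})$ for all $i$, hence $\vec{e}\in Z(\prod\mathbf{A}_{i})$ and $\Phi(\vec{e})=(\vec{g}_{i})_{i\in I}$. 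A bijective Boolean homomorphism is an isomorphism, so $\mathbf{Z}(\prod\mathbf{A}_{i})\cong\prod\mathbf{Z}(\mathbf{A}_{i})$ via the natural coordinatewise map, as claimed.
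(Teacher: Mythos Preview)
Your proof is correct and follows exactly the line the paper itself sketches just before stating Lemmas~\ref{In FHP surjections give homomorphisms} and~\ref{Products Center FHP}: the forward direction via the projections and Lemma~\ref{In FHP surjections give homomorphisms}, the backward direction via product-preservation of the $(\forall\exists\bigwedge p=q)$- and $(\forall\bigwedge p=q)$-formulas from Lemma~\ref{Lema FHP}, and the isomorphism assembled from these. The paper does not spell this out but simply refers to \cite{va4} for the details, so your argument is in fact more explicit than the paper's own treatment while remaining in the same spirit.
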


\bigskip 

For the details of the proofs of Lemmas \ref{In FHP surjections give
homomorphisms} and \ref{Products Center FHP}, the reader may consult the
proofs of the items $(a)$ and $(b)$ of Lemma 4 in \cite{va4}.

\bigskip

\subsection{Key theorem}

Next, we will prove a series of lemmas in order to demonstrate a result
(Theorem \ref{exitencia del u}) which will be fundamental in our study of
the Pierce stalks.

\begin{lemma}
\label{centrales-complementarios}Let $\mathcal{V}$ be a variety with $\vec{0}
$ and $\vec{1}$ with the FHP such that $\mathbb{P}_{u}(\mathcal{V}%
_{SI})\subseteq \mathcal{V}_{DI}$. Then, the property $``\vec{e}\in Z(%
\mathbf{A})"$ is definable in $\mathcal{V}$ by a single first order formula.
\end{lemma}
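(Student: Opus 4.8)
The plan is to squeeze the (a~priori infinite) defining family $\Gamma:=\Sigma_0\cup\{F\text{-}PRES(\vec z):F\in\mathcal L\}$ furnished by Lemma~\ref{Lema FHP} down to a finite subfamily, whose conjunction is then the single first-order formula we want. By compactness it suffices to produce a finite $\Gamma_0\subseteq\Gamma$ with $\mathcal V\vDash\bigwedge\Gamma_0(\vec z)\to\gamma(\vec z)$ for every $\gamma\in\Gamma$. If there were none, then for each finite $\Gamma_0\subseteq\Gamma$ we could pick $\mathbf A_{\Gamma_0}\in\mathcal V$ and $\vec e_{\Gamma_0}\in A_{\Gamma_0}^N$ with $\mathbf A_{\Gamma_0}\vDash\bigwedge\Gamma_0(\vec e_{\Gamma_0})$ but $\vec e_{\Gamma_0}\notin Z(\mathbf A_{\Gamma_0})$, and an ultraproduct of the $(\mathbf A_{\Gamma_0},\vec e_{\Gamma_0})$ along an ultrafilter concentrated on the tails $\{\Gamma_0:\Gamma_0\supseteq\Gamma_1\}$ would give $\mathbf B\in\mathcal V$ and $\vec f\in B^N$ with $\mathbf B\vDash\gamma(\vec f)$ for all $\gamma\in\Gamma$, i.e.\ $\vec f\in Z(\mathbf B)$. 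Hence the whole statement reduces to the following closure property: \emph{if $\mathbf A=\prod_U\mathbf A_i$ and $\vec e=(\vec e_i)_i/U\in Z(\mathbf A)$, then $\{i:\vec e_i\in Z(\mathbf A_i)\}\in U$}, which contradicts the choices above.

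To prove this, let $\vec e^{\,*}=(\vec e_i^{\,*})_i/U$ be the complement of $\vec e$ in $\mathbf Z(\mathbf A)$ and recall from \cite{va7} the $(\exists\bigwedge p=q)$-formula $\varepsilon$, so that on $\mathbf A$ the relations $\theta=\{(a,b):\mathbf A\vDash\varepsilon(a,b,\vec e)\}$ and $\theta^{*}=\{(a,b):\mathbf A\vDash\varepsilon(a,b,\vec e^{\,*})\}$ are the complementary factor congruences $\lambda^{-1}(\vec e),\lambda^{-1}(\vec e^{\,*})$, with $\vec e\equiv\vec0(\theta)$, $\vec e\equiv\vec1(\theta^{*})$, $\vec e^{\,*}\equiv\vec1(\theta)$, $\vec e^{\,*}\equiv\vec0(\theta^{*})$. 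Writing $\theta_i=\{(x,y):\mathbf A_i\vDash\varepsilon(x,y,\vec e_i)\}$ and $\theta_i^{*}$ likewise, every \emph{individual} first-order assertion about $\theta,\theta^{*}$ transfers by {\L}o\'s' theorem to $\theta_i,\theta_i^{*}$ for all $i$ in a set of $U$: that $\theta_i,\theta_i^{*}$ are equivalence relations, that $\theta_i\cap\theta_i^{*}=\Delta$, that $\theta_i\circ\theta_i^{*}=\nabla$ (permutability is a single $\forall\exists$ sentence), and the membership relations $\vec e_i\equiv\vec0(\theta_i)$, $\vec e_i^{\,*}\equiv\vec0(\theta_i^{*})$, etc. The one thing this cheap argument cannot deliver when $\mathcal L$ is infinite is that $\theta_i$ and $\theta_i^{*}$ be \emph{compatible with every operation symbol}: each individual compatibility holds on a $U$-large index set, but an intersection of infinitely many such sets need not be in $U$. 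Filling this gap is the main obstacle, and it is exactly where $\mathbb P_u(\mathcal V_{SI})\subseteq\mathcal V_{DI}$ enters.

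The remedy is to replace $\theta_i,\theta_i^{*}$ by genuine congruences. Suppose for $U$-many $i$ there is a completely meet-irreducible congruence $\mu_i$ of $\mathbf A_i$ with $\vec e_i/\mu_i\notin\{\vec0,\vec1\}$. Then $\prod_U(\mathbf A_i/\mu_i)=(\prod_U\mathbf A_i)/(\prod_U\mu_i)$ lies in $\mathbb P_u(\mathcal V_{SI})\subseteq\mathcal V_{DI}$, so its centre is $\{\vec0,\vec1\}$ with $\vec0\neq\vec1$; being a surjective image of $\mathbf A$ it carries the image of $\vec e\in Z(\mathbf A)$ into its centre by Lemma~\ref{In FHP surjections give homomorphisms}, contradicting $\vec e_i/\mu_i\notin\{\vec0,\vec1\}$ for all $i$. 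Hence for $U$-most $i$, \emph{every} completely meet-irreducible $\mu$ of $\mathbf A_i$ has $\vec e_i/\mu,\vec e_i^{\,*}/\mu\in\{\vec0,\vec1\}$; moreover, since Lemma~\ref{In FHP surjections give homomorphisms} makes such a quotient induce a Boolean homomorphism of centres carrying $\vec e\vee\vec e^{\,*}=\vec1$ and $\vec e\wedge\vec e^{\,*}=\vec0$, the pair $(\vec e_i/\mu,\vec e_i^{\,*}/\mu)$ is always $(\vec0,\vec1)$ or $(\vec1,\vec0)$. Now set $\eta_0^i=\bigcap\{\mu:\mu \text{ completely meet-irreducible},\ \vec e_i\equiv\vec0(\mu)\}$ and $\eta_1^i=\bigcap\{\mu:\mu \text{ completely meet-irreducible},\ \vec e_i\equiv\vec1(\mu)\}$: these are honest congruences with $\eta_0^i\cap\eta_1^i=\Delta$, $\vec e_i\equiv\vec0(\eta_0^i)$, $\vec e_i\equiv\vec1(\eta_1^i)$. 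Because $\varepsilon$ is existential positive it is preserved by the quotient maps onto the $\mathbf A_i/\mu$, and $\varepsilon(\_,\_,\vec0)$ is the diagonal; this yields $\theta_i\subseteq\eta_0^i$ and, using that $\vec e_i^{\,*}\equiv\vec0(\mu)$ exactly when $\vec e_i\equiv\vec1(\mu)$, also $\theta_i^{*}\subseteq\eta_1^i$. Together with $\theta_i\circ\theta_i^{*}=\nabla$ this forces $\eta_0^i\circ\eta_1^i=\nabla$, so $\eta_0^i,\eta_1^i$ are complementary factor congruences of $\mathbf A_i$ witnessing $\vec e_i\in Z(\mathbf A_i)$, for all $i$ in a set of $U$. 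That is what the first paragraph needed.
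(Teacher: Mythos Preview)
Your argument is correct in outline but takes a considerably more elaborate route than the paper's. The paper works directly: over $\mathbb{P}_u(\mathcal{V}_{SI})\subseteq\mathcal{V}_{DI}$ centrality collapses to ``$\vec z=\vec 0$ or $\vec z=\vec 1$'', so $\Gamma(\vec z)\vdash(\vec z=\vec 0)\vee(\vec z=\vec 1)$ holds there, and ordinary compactness yields a finite $\Gamma_0\subseteq\Gamma$ (all of $\Sigma_0$ together with $F$-$PRES$ for $F$ in some finite $\mathcal L_0$) with the same entailment over $\mathbb{P}_u(\mathcal{V}_{SI})$. To see that $\psi=\bigwedge\Gamma_0$ already defines centrality throughout $\mathcal{V}$, one represents any $\mathbf A\vDash\psi(\vec e)$ subdirectly with SI factors; positivity of $\psi$ pushes it down to each factor, forcing $\vec e(i)\in\{\vec 0,\vec 1\}$ there, whence every remaining $F$-$PRES(\vec e(i))$ holds trivially, and these $(\forall\bigwedge p=q)$-formulas then lift back through the subdirect embedding to $\mathbf A$. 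This sidesteps your ultraproduct reflection principle for centrality entirely and never needs to manufacture the congruences $\eta_0^i,\eta_1^i$.

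One step in your proof is underjustified as written. When you assert that $(\vec e_i/\mu,\vec e_i^{\,*}/\mu)\in\{(\vec 0,\vec 1),(\vec 1,\vec 0)\}$ by appeal to Lemma~\ref{In FHP surjections give homomorphisms} for ``such a quotient'', this cannot refer to $\mathbf A_i\to\mathbf A_i/\mu$: the element $\vec e_i$ is not yet known to lie in $Z(\mathbf A_i)$, so the Boolean relations $\vec e\vee\vec e^{\,*}=\vec 1$ and $\vec e\wedge\vec e^{\,*}=\vec 0$ make no sense at the level of $\mathbf A_i$. What does work is to repeat your earlier ultraproduct manoeuvre: if for $U$-many $i$ some completely meet-irreducible $\mu_i$ had $\vec e_i/\mu_i=\vec 1=\vec e_i^{\,*}/\mu_i$, then the non-trivial quotient $\mathbf A\to\prod_U(\mathbf A_i/\mu_i)\in\mathbb{P}_u(\mathcal{V}_{SI})\subseteq\mathcal V_{DI}$ would, by Lemma~\ref{In FHP surjections give homomorphisms}, send the complementary pair $\vec e,\vec e^{\,*}\in Z(\mathbf A)$ both to $\vec 1$, forcing $\vec 0=\vec 1$ in a non-trivial algebra. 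With this clarification your proof is complete.
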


\begin{proof}
By Lemma \ref{Lema FHP}, there is a finite set $\Sigma _{0}$ of $(\forall
\exists \bigwedge p=q)$-formulas in the variables $z_{1},...,z_{N}$ and $%
(\forall \bigwedge p=q)$-formulas%
\begin{equation*}
F\text{-}PRES(z_{1},...,z_{N})\text{, with }F\in \mathcal{L}
\end{equation*}%
such that $\Sigma _{0}\cup \{F$-$PRES(\vec{z}):F\in \mathcal{L}\}$ defines
the property $``\vec{e}\in Z(\mathbf{A})"$. Since $\mathbb{P}_{u}(\mathcal{V}%
_{SI})\subseteq \mathcal{V}_{DI}$ we have that%
\begin{equation*}
\mathbb{P}_{u}(\mathcal{V}_{SI})\models \left( \bigwedge_{\varphi \in \Sigma
_{0}}\varphi (\vec{z})\wedge \bigwedge_{F\in \mathcal{L}}F\text{-}PRES(\vec{z%
})\right) \rightarrow \left( (\vec{z}=\vec{0})\vee (\vec{z}=\vec{1})\right)
\end{equation*}%
So, by compactness there exists a finite subset $\mathcal{L}_{0}\subseteq 
\mathcal{L}$ such that:%
\begin{equation}
\mathbb{P}_{u}(\mathcal{V}_{SI})\models \left( \bigwedge_{\varphi \in \Sigma
_{0}}\varphi (\vec{z})\wedge \bigwedge_{F\in \mathcal{L}_{0}}F\text{-}PRES(%
\vec{z})\right) \rightarrow \left( (\vec{z}=\vec{0})\vee (\vec{z}=\vec{1}%
)\right)  \label{equation ast}
\end{equation}%
We will see that the formula $\psi(\vec{z}) =\bigwedge_{\varphi \in \Sigma
_{0}}\varphi (\vec{z})\wedge \bigwedge_{F\in \mathcal{L}_{0}}F$-$PRES(\vec{z}%
)$ defines the property $``\vec{e}\in Z(\mathbf{A})"$ in $\mathcal{V}$. To
do so, it is enough to check that if $\mathbf{A}\in \mathcal{V}$ and $%
\mathbf{A}\models \psi (\vec{e})$, then $\mathbf{A}\models F\text{-}PRES(%
\vec{e})$, for every $F\in \mathcal{L}-\mathcal{L}_{0}$. From Birkhoff's
subdirect representation theorem, we can assume that $\mathbf{A}\leq \prod 
\mathbf{A}_{i}$ is a subdirect product with subdirectly irreducible factors.
Since the formula $\psi $ is positive and the $\mathbf{A}_{i}$ are quotients
of $\mathbf{A}$, we obtain that $\mathbf{A}_{i}\models \psi (\vec{e}(i))$,
for every $i\in I$. But every $\mathbf{A}_{i}$ belongs to $\mathbb{P}_{u}(%
\mathcal{V}_{SI})$ so, from (\ref{equation ast}) we have that $\vec{e}(i)\in
\{\vec{0}^{\mathbf{A}_{i}},\vec{1}^{\mathbf{A}_{i}}\}$, for every $i\in I$.
Since $\vec{0}^{\mathbf{A}_{i}},\vec{1}^{\mathbf{A}_{i}}\in Z(\mathbf{A}%
_{i}) $ we have that 
\begin{equation*}
\mathbf{A}_{i}\models F\text{-}PRES(\vec{0}^{\mathbf{A}_{i}})\wedge F\text{-}%
PRES(\vec{1}^{\mathbf{A}_{i}})\text{, for every }F\in \mathcal{L}
\end{equation*}%
therefore, we get that 
\begin{equation*}
\mathbf{A}_{i}\models F\text{-}PRES(\vec{e}(i))\text{, for every }F\in 
\mathcal{L}-\mathcal{L}_{0}
\end{equation*}%
for every $i\in I$. But the formulas $F\text{-}PRES(z_{1},...,z_{N})$ are $%
(\forall \bigwedge p=q)$-formulas, so they are preserved by subdirect
products. Therefore, we can conclude that 
\begin{equation*}
\mathbf{A}\models F\text{-}PRES(\vec{e})\text{, for every }F\in \mathcal{L}-%
\mathcal{L}_{0}.
\end{equation*}
\end{proof}

\begin{lemma}
\label{Preservation centrals} Let $\mathcal{V}$ be a variety with $\vec{0}$
and $\vec{1}$ with the FHP, such that $\mathbb{P}_{u}(\mathcal{V}%
_{SI})\subseteq \mathcal{V}_{DI}$. Then, the following are equivalent:

\begin{enumerate}
\item The property $``\vec{e}\in Z(\mathbf{A})"$ is definable in $\mathcal{V}
$ by an existential positive formula.

\item The homomorphisms in $\mathcal{V}$ preserve central elements. I.e. if $%
\sigma :\mathbf{A}\rightarrow \mathbf{B}$ is a homomorphism between elements
of $\mathcal{V}$ and $\vec{e}\in Z(\mathbf{A})$, then $\sigma (\vec{e})\in Z(%
\mathbf{B}) $.

\item The property $``\vec{e}\in Z(\mathbf{A})"$ is definable in $\mathcal{V}
$ by a $(\exists \bigwedge p=q)$-formula.
\end{enumerate}
\end{lemma}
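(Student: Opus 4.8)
The plan is to obtain the three equivalences simultaneously from Lemma~\ref{lema semantico}. Introduce a fresh $N$-ary relation symbol $Z$ (with $N$ the length of $\vec{0}$), put $\mathcal{L}^{\prime}=\mathcal{L}\cup\{Z\}$, and let $\mathcal{K}$ be the class of all $\mathcal{L}^{\prime}$-structures of the form $(\mathbf{A},Z(\mathbf{A}))$ with $\mathbf{A}\in\mathcal{V}$, i.e.\ each algebra of $\mathcal{V}$ expanded by interpreting $Z$ as its set of central elements (so the $\mathcal{L}$-reduct of $(\mathbf{A},Z(\mathbf{A}))$ is $\mathbf{A}$ itself). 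Unwinding the definitions, an $\mathcal{L}$-formula $\varphi(\vec{z})$ defines $Z$ in $\mathcal{K}$ if and only if $\varphi$ defines the property $``\vec{e}\in Z(\mathbf{A})"$ in $\mathcal{V}$. Hence items $1$ and $3$ of the present lemma are exactly items $1$ and $3$ of Lemma~\ref{lema semantico} for this $\mathcal{K}$, and the remaining work is only to verify the hypotheses of Lemma~\ref{lema semantico} and to rephrase its item $2$.

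First I would record two closure properties of $\mathcal{K}$. Closure under finite direct products is immediate from Lemma~\ref{Products Center FHP}: the direct product of $\mathcal{L}^{\prime}$-structures $(\mathbf{A}_{1},Z(\mathbf{A}_{1}))\times(\mathbf{A}_{2},Z(\mathbf{A}_{2}))$ has $\mathcal{L}$-reduct $\mathbf{A}_{1}\times\mathbf{A}_{2}$ and its interpretation of $Z$ is $Z(\mathbf{A}_{1})\times Z(\mathbf{A}_{2})$, which equals $Z(\mathbf{A}_{1}\times\mathbf{A}_{2})$, so the product lies in $\mathcal{K}$. Next, $\mathcal{K}$ is an \emph{elementary} class: by Lemma~\ref{centrales-complementarios} (this is the one place where the hypothesis $\mathbb{P}_{u}(\mathcal{V}_{SI})\subseteq\mathcal{V}_{DI}$ enters) there is a \emph{single} first order $\mathcal{L}$-formula $\psi(\vec{z})$ defining $``\vec{e}\in Z(\mathbf{A})"$ in $\mathcal{V}$, so $\mathcal{K}$ is precisely the class of $\mathcal{L}^{\prime}$-structures modelling the equational axioms of $\mathcal{V}$ together with the one sentence $\forall\vec{z}\,\bigl(Z(\vec{z})\leftrightarrow\psi(\vec{z})\bigr)$. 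An elementary class is closed under ultraproducts, and trivially $\mathcal{K}\subseteq\mathbb{P}_{u}(\mathcal{K})$ via principal ultrafilters; therefore $\mathbb{P}_{u}(\mathcal{K})=\mathcal{K}$.

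Finally I would translate item $2$. Because $\mathbb{P}_{u}(\mathcal{K})=\mathcal{K}$, item $2$ of Lemma~\ref{lema semantico} asserts that for all $\mathbf{A},\mathbf{B}\in\mathcal{V}$ every $\mathcal{L}$-homomorphism $\sigma:\mathbf{A}\to\mathbf{B}$ is also a homomorphism of the corresponding $\mathcal{L}^{\prime}$-structures; and a homomorphism of $\mathcal{L}^{\prime}$-structures $(\mathbf{A},Z(\mathbf{A}))\to(\mathbf{B},Z(\mathbf{B}))$ is precisely an $\mathcal{L}$-homomorphism $\sigma$ with $\sigma(\vec{e})\in Z(\mathbf{B})$ for every $\vec{e}\in Z(\mathbf{A})$ — which is item $2$ of the present lemma. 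So Lemma~\ref{lema semantico} gives the equivalence of items $1$ and $2$, and, $\mathcal{K}$ being closed under finite direct products, its ``Moreover'' clause appends item $3$. I expect the only real difficulty to be securing the input from Lemma~\ref{centrales-complementarios}: a \emph{single} defining first order formula is essential, since otherwise (for infinite $\mathcal{L}$) $\mathcal{K}$ need not be elementary, the identity $\mathbb{P}_{u}(\mathcal{K})=\mathcal{K}$ can fail, and the reading of item $2$ of Lemma~\ref{lema semantico} as item $2$ here would collapse.
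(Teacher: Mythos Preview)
Your proposal is correct and follows essentially the same approach as the paper: expand each algebra of $\mathcal{V}$ by a relation symbol interpreted as its set of central elements, use Lemma~\ref{centrales-complementarios} to obtain a single defining formula and hence elementarity of the resulting class $\mathcal{K}$, use Lemma~\ref{Products Center FHP} for closure under products, and then read off the three equivalences from Lemma~\ref{lema semantico}. Your write-up is somewhat more explicit about why $\mathbb{P}_{u}(\mathcal{K})=\mathcal{K}$ and about the role of the single defining formula, but the argument is the same.
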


\begin{proof}
Let $\mathcal{L}$ be the language of $\mathcal{V}$ and let $\mathcal{L}%
^{\prime }=\mathcal{L}\cup \{R\}$ where $R$ is a $N$-ary relation symbol.
Given an algebra $\mathbf{A}\in \mathcal{V}$ we define:%
\begin{equation*}
R^{\mathbf{A}}=Z(\mathbf{A})
\end{equation*}%
Let $\mathcal{K}$ be the following class of $\mathcal{L}^{\prime }$%
-structures%
\begin{equation*}
\mathcal{K}=\{(\mathbf{A},R^{\mathbf{A}}):\mathbf{A}\in \mathcal{V}\}
\end{equation*}
From Lemma \ref{centrales-complementarios}, there exists a first order
formula $\sigma (z_{1},...,z_{N})$, in the language of $\mathcal{V}$, such
that for every $\mathbf{A}\in \mathcal{V}$, we have that $\vec{e}\in Z(%
\mathbf{A})$ if and only if $\mathbf{A}\models \sigma (\vec{e})$. That is to
say, for every $\mathbf{A}\in \mathcal{V}$, we have that $%
(e_{1},...,e_{N})\in R^{\mathbf{A}}$ if and only if $\mathbf{A}\models
\sigma (\vec{e})$. It is easy to see that the class $\mathcal{K}$ is
axiomatizable by the set of sentences%
\begin{equation*}
\Sigma \cup \{\forall z_{1}...z_{N}\ \ \left(
R(z_{1},...,z_{N})\leftrightarrow \sigma (z_{1},...,z_{N})\right) \}
\end{equation*}%
where $\Sigma $ is any set of axioms defining $\mathcal{V}$. Observe that
since $\mathcal{K}$ is a first order class, it is closed by ultraproducts;
and furthermore, from Lemma \ref{Products Center FHP}, $\mathcal{K}$ is
closed under the formation of direct products. Hence, from Lemma \ref{lema
semantico} we obtain that the following are equivalent:

\begin{enumerate}
\item[(a)] There is an existential positive $\mathcal{L}$-formula which
defines $R$ in $\mathcal{K}$.

\item[(b)] If $(\mathbf{A},R^{\mathbf{A}}),(\mathbf{B},R^{\mathbf{B}})\in 
\mathcal{K}$ and $\sigma :\mathbf{A}\rightarrow \mathbf{B}$ is a
homomorphism, then $\sigma :(\mathbf{A},R^{\mathbf{A}})\rightarrow (\mathbf{B%
},R^{\mathbf{B}})$ is a homomorphism

\item[(c)] There is a $(\exists \bigwedge p=q)$-formula which defines $R$ in 
$\mathcal{K}$.
\end{enumerate}

But $1.$, $2.$ and $3.$ are restatements of $(a)$, $(b)$ and $(c)$,
respectively. That is to say, $1.$, $2.$ and $3.$ are equivalent as required.
\end{proof}

\begin{lemma}
\label{Centrals in FHP} Let $\mathcal{V}$ be a variety with $\vec{0}$ and $%
\vec{1}$ with the FHP and $\mathbf{A}\in \mathcal{V}$. Let $\vec{e}\in A^{N}$%
. Then, $\vec{e}\in Z(\mathbf{A})$ if and only if $\theta ^{\mathbf{A}}(\vec{%
0}^{\mathbf{A}},\vec{e})$ and $\theta ^{\mathbf{A}}(\vec{1}^{\mathbf{A}},%
\vec{e})$ are a pair of complementary factor congruences of $\mathbf{A}$.
\end{lemma}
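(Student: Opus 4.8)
The plan is to prove the two implications separately. Throughout I will use the standard consequence of the \emph{FHP}: every congruence of a finite product $\mathbf{B}\times \mathbf{C}$ of members of $\mathcal{V}$ is a product congruence, whence
\begin{equation*}
\theta ^{\mathbf{B}\times \mathbf{C}}((b_{1},c_{1}),(b_{2},c_{2}))=\theta ^{\mathbf{B}}(b_{1},b_{2})\times \theta ^{\mathbf{C}}(c_{1},c_{2})
\end{equation*}
for all $b_{1},b_{2}\in B$ and $c_{1},c_{2}\in C$ (the right hand side is a product congruence containing the pair, while any congruence containing the pair, being a product congruence, contains it). A routine variant of the same argument shows that joins of product congruences are computed componentwise. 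I will also use that $\theta ^{\mathbf{C}}(\vec{0}^{\mathbf{C}},\vec{1}^{\mathbf{C}})=\nabla ^{\mathbf{C}}$ for every $\mathbf{C}\in \mathcal{V}$, which is immediate from $\mathcal{V}\vDash \vec{0}=\vec{1}\rightarrow x=y$, and that an isomorphism carries principal congruences to principal congruences (apply Lemma \ref{homomorfismos preservan} to the isomorphism and to its inverse), so that it acts as a lattice isomorphism on congruence lattices preserving $\Delta$, $\nabla$ and the principal congruences $\theta^{\mathbf{A}}(\vec a,\vec b)$.

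For the implication $(\Rightarrow)$ I would assume $\vec{e}\in Z(\mathbf{A})$, fix an isomorphism $f\colon \mathbf{A}\rightarrow \mathbf{B}\times \mathbf{C}$ with $f(e_{i})=(0_{i}^{\mathbf{B}},1_{i}^{\mathbf{C}})$ for $i=1,\dots ,N$, note that $f(0_{i}^{\mathbf{A}})=(0_{i}^{\mathbf{B}},0_{i}^{\mathbf{C}})$, and then compute, using the facts collected above,
\begin{equation*}
f\big(\theta ^{\mathbf{A}}(\vec{0}^{\mathbf{A}},\vec{e})\big)=\bigvee_{i=1}^{N}\big(\Delta ^{\mathbf{B}}\times \theta ^{\mathbf{C}}(0_{i}^{\mathbf{C}},1_{i}^{\mathbf{C}})\big)=\Delta ^{\mathbf{B}}\times \theta ^{\mathbf{C}}(\vec{0}^{\mathbf{C}},\vec{1}^{\mathbf{C}})=\Delta ^{\mathbf{B}}\times \nabla ^{\mathbf{C}},
\end{equation*}
and symmetrically $f\big(\theta ^{\mathbf{A}}(\vec{1}^{\mathbf{A}},\vec{e})\big)=\nabla ^{\mathbf{B}}\times \Delta ^{\mathbf{C}}$. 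Since $\Delta ^{\mathbf{B}}\times \nabla ^{\mathbf{C}}$ and $\nabla ^{\mathbf{B}}\times \Delta ^{\mathbf{C}}$ are a pair of complementary factor congruences of $\mathbf{B}\times \mathbf{C}$, pulling them back along $f$ shows that $\theta ^{\mathbf{A}}(\vec{0}^{\mathbf{A}},\vec{e})$ and $\theta ^{\mathbf{A}}(\vec{1}^{\mathbf{A}},\vec{e})$ are a pair of complementary factor congruences of $\mathbf{A}$.

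For $(\Leftarrow)$ I would set $\theta _{1}=\theta ^{\mathbf{A}}(\vec{0}^{\mathbf{A}},\vec{e})$ and $\theta _{2}=\theta ^{\mathbf{A}}(\vec{1}^{\mathbf{A}},\vec{e})$, assume they are complementary factor congruences, and observe that the canonical map $g\colon \mathbf{A}\rightarrow \mathbf{A}/\theta _{1}\times \mathbf{A}/\theta _{2}$, $a\mapsto (a/\theta _{1},a/\theta _{2})$, is then an isomorphism; since $(0_{i}^{\mathbf{A}},e_{i})\in \theta _{1}$ and $(1_{i}^{\mathbf{A}},e_{i})\in \theta _{2}$ for each $i$, one gets $g(\vec{e})=[\vec{0},\vec{1}]$, so $\vec{e}\in Z(\mathbf{A})$. (This direction uses neither the FHP nor Lemma \ref{homomorfismos preservan}.) The main obstacle is entirely in $(\Rightarrow)$: carefully justifying the decomposition of a principal congruence on $\mathbf{B}\times \mathbf{C}$ as a product of principal congruences, together with the componentwise evaluation of the iterated join — this is precisely the point where the FHP is needed. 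The rest is bookkeeping with the identity $\vec{0}=\vec{1}\rightarrow x=y$ and with the action of an isomorphism on congruence lattices.
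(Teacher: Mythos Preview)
Your argument is correct. The paper does not actually prove this lemma; it simply refers the reader to Corollary~4 of \cite{va7}, so your self-contained treatment supplies what the paper omits. One small remark: the componentwise computation of joins of product congruences that you invoke holds in any variety (a short chain argument suffices), so the FHP is needed only at the point you identify, namely to ensure that the principal congruence $\theta^{\mathbf{B}\times\mathbf{C}}((b_1,c_1),(b_2,c_2))$ is itself a product congruence and hence equal to $\theta^{\mathbf{B}}(b_1,b_2)\times\theta^{\mathbf{C}}(c_1,c_2)$.
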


\begin{proof}
For details of the proof, the reader may consult Corollary 4 of \cite{va7}.
\end{proof}

\begin{lemma}
\label{Lemma Centrals global} Let $\mathcal{V}$ be a variety with $\vec{0}$
and $\vec{1}$ with the FHP and let $\{\mathbf{A}_{i}\}_{i\in I}$ be a family
of non-trivial members of $\mathcal{V}$. Suppose $\mathbf{A}\leq \prod 
\mathbf{A}_{i}$ is a global subdirect product. If $\vec{e}\in A^{N}$ is such
that $\vec{e}(i)\in \{\vec{0}^{\mathbf{A}_{i}},\vec{1}^{\mathbf{A}_{i}}\}$,
for every $i\in I$, then $\vec{e}\in Z(\mathbf{A})$.
\end{lemma}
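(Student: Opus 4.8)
The plan is to read off the required direct-product decomposition of $\mathbf A$ directly from the topology $\tau$ on $I$ witnessing globality, and then check it sends $\vec e$ to $[\vec 0,\vec 1]$, so that $\vec e$ is central by definition. Write $\vec 0^{\mathbf A},\vec 1^{\mathbf A}\in A^{N}$ for the coordinatewise interpretations of the nullary terms and set $U=\{i\in I:\vec e(i)=\vec 1^{\mathbf A_{i}}\}$ and $V=\{i\in I:\vec e(i)=\vec 0^{\mathbf A_{i}}\}$. First I would observe that $U=\bigcap_{k=1}^{N}E(e_{k},1_{k}^{\mathbf A})$ and $V=\bigcap_{k=1}^{N}E(e_{k},0_{k}^{\mathbf A})$, so both lie in $\tau$, being finite intersections of sets of the form $E(\cdot,\cdot)$. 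Since each $\mathbf A_{i}$ is non-trivial we have $\vec 0^{\mathbf A_{i}}\neq\vec 1^{\mathbf A_{i}}$ (the defining property of a variety with $\vec 0$ and $\vec 1$), and since $\vec e(i)\in\{\vec 0^{\mathbf A_{i}},\vec 1^{\mathbf A_{i}}\}$ for every $i$, the pair $U,V$ is a partition of $I$ into two $\tau$-clopen pieces.

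Next I would introduce the congruences $\theta_{U}=\{(x,y)\in A^{2}:U\subseteq E(x,y)\}$ and $\theta_{V}=\{(x,y)\in A^{2}:V\subseteq E(x,y)\}$ of $\mathbf A$ — $\theta_{U}$ being the kernel of the restriction to $\mathbf A$ of the projection $\prod_{i\in I}\mathbf A_{i}\to\prod_{i\in U}\mathbf A_{i}$, and similarly for $\theta_{V}$ — and argue that they form a pair of complementary factor congruences of $\mathbf A$. The inclusion $\theta_{U}\cap\theta_{V}\subseteq\Delta^{\mathbf A}$ is immediate from $U\cup V=I$ together with $\mathbf A\leq\prod\mathbf A_{i}$. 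For $\theta_{U}\circ\theta_{V}=\nabla^{\mathbf A}$ — the only point where globality enters, and the crux of the proof — given $x,y\in A$ I would apply the Patchwork Property to the cover $\{U,V\}$ of $I$ and the family $\{x,y\}$, with $x$ indexed by $U$ and $y$ indexed by $V$; the required compatibility $U\cap V\subseteq E(x,y)$ holds vacuously because $U\cap V=\emptyset$, so Patchwork yields $z\in A$ with $U\subseteq E(z,x)$ and $V\subseteq E(z,y)$, i.e. $x\mathrel{\theta_{U}}z\mathrel{\theta_{V}}y$. Interchanging the roles of $U$ and $V$ gives $\theta_{V}\circ\theta_{U}=\nabla^{\mathbf A}$ as well, so $x\mapsto(x/\theta_{V},x/\theta_{U})$ is an isomorphism $\mathbf A\to\mathbf A/\theta_{V}\times\mathbf A/\theta_{U}$.

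To finish I would check that this isomorphism carries $\vec e$ to $[\vec 0,\vec 1]$: for $i\in V$ the tuple $\vec e$ agrees coordinatewise with $\vec 0^{\mathbf A}$, hence $\vec e\equiv\vec 0^{\mathbf A}\,(\theta_{V})$ and $\vec e/\theta_{V}=\vec 0^{\mathbf A/\theta_{V}}$, and dually $\vec e\equiv\vec 1^{\mathbf A}\,(\theta_{U})$ gives $\vec e/\theta_{U}=\vec 1^{\mathbf A/\theta_{U}}$; so by the definition of central element, $\vec e\in Z(\mathbf A)$. (One could instead route the conclusion through Lemma~\ref{Centrals in FHP}, noting first that $\theta^{\mathbf A}(\vec 0^{\mathbf A},\vec e)\subseteq\theta_{V}$ and $\theta^{\mathbf A}(\vec 1^{\mathbf A},\vec e)\subseteq\theta_{U}$ and then upgrading these inclusions to equalities using $\theta_{U}\cap\theta_{V}=\Delta^{\mathbf A}$ and $\theta_{U}\circ\theta_{V}=\nabla^{\mathbf A}$; but the direct argument is shorter.) I expect no genuine difficulty: the sole non-formal ingredient is the Patchwork application, and the only thing to be careful about is the bookkeeping showing that $U$ and $V$ really belong to $\tau$, which is exactly what makes Patchwork applicable.
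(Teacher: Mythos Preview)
Your proof is correct and follows the same core route as the paper: partition $I$ into the two open pieces according to whether $\vec e(i)$ equals $\vec 0$ or $\vec 1$, use the Patchwork Property to show that the induced congruences are a complementary pair of factor congruences, and conclude. The only divergence is in the last step. The paper takes precisely the alternative you sketch in your parenthetical remark: it invokes Lemma~\ref{Centrals in FHP} and must therefore identify $\theta_V$ with $\theta^{\mathbf A}(\vec 0^{\mathbf A},\vec e)$ (and dually), which it does by using that in a variety with FHP factor congruences distribute over arbitrary congruences. Your direct verification that the isomorphism $\mathbf A\to\mathbf A/\theta_V\times\mathbf A/\theta_U$ sends $\vec e$ to $[\vec 0,\vec 1]$ is shorter and, notably, never invokes the FHP hypothesis --- so your argument in fact establishes the lemma without that assumption.
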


\begin{proof}
Let $\mathbf{A}\in \mathcal{V}$ and $\vec{e}\in A^{N}$ satisfying the
hypothesis of the statement. Since $\mathcal{V}$ has FHP, in order to prove
that $\vec{e}\in Z(\mathbf{A})$, from Lemma \ref{Centrals in FHP}, we must
verify that $\theta ^{\mathbf{A}}(\vec{0}^{\mathbf{A}},\vec{e})$ and $\theta
^{\mathbf{A}}(\vec{1}^{\mathbf{A}},\vec{e})$ are a pair complementary factor
congruences of $\mathbf{A}$. So, let $J=\{i\in I:\vec{e}(i)=\vec{0}^{\mathbf{%
A}_{i}}\}$. Notice that, since $\mathbf{A}_{i}$ is not trivial, for every $%
i\in I$, then we obtain that $I-J=\{i\in I:\vec{e}(i)=\vec{1}^{\mathbf{A}%
_{i}}\}$. Let $i\in I$. Then, it follows that $\theta _{i}^{\mathbf{A}%
}=\{(a,b)\in A\times A:i\in E(a,b)\}$. Moreover, for every $F\subseteq I$,
it is clear that $\bigcap_{i\in F}\theta _{i}^{\mathbf{A}}=\{(a,b)\in
A\times A:F\subseteq E(a,b)\}$. Now let us to consider $\theta
=\bigcap_{i\in J}\theta _{i}^{\mathbf{A}}$ and $\delta =\bigcap_{i\in
I-J}\theta _{i}^{\mathbf{A}}$. We will prove that $\theta $ and $\delta $
are a pair of complementary factor congruences of $\mathbf{A}$. If $(a,b)\in
\theta \cap \delta $, then $J\subseteq E(a,b)$ and $I-J\subseteq E(a,b)$,
thus $E(a,b)=I$. Hence, $\theta \cap \delta =\Delta ^{\mathbf{A}}$. In order
to show that $\theta \circ \delta =\nabla ^{\mathbf{A}}$, let $(a,b)\in
\nabla ^{\mathbf{A}}$. By assumption, $\mathbf{A}$ is a global subdirect
product of $\{\mathbf{A}_{i}\}_{i\in I}$. So, since $J=%
\bigcap_{k=1}^{N}E(e_{k},0_{k}^{\mathbf{A}})$ and $I-J=%
\bigcap_{k=1}^{N}E(e_{k},1_{k}^{\mathbf{A}})$, then $J$ and $I-J$ are open
sets of the topology over $I$ which contains all the equalizers of elements
of $A$. Therefore, because $\emptyset =J\cap (I-J)\subseteq E(a,b)$, from
the Patchwork Property it follows that there is a $z\in A$, such that $%
J\subseteq E(a,z)$ and $I-J\subseteq E(z,b)$. I.e., $(a,z)\in \theta $ and $%
(z,b)\in \delta $. In consequence, $(a,b)\in \theta \circ \delta $. \newline

\noindent Now we will see that $\theta =\theta ^{\mathbf{A}}(\vec{0}^{%
\mathbf{A}},\vec{e})$. Since $\theta =\{(a,b)\in A\times A:J\subseteq
E(a,b)\}$ and $\vec{e}(i)\in \{\vec{0}^{\mathbf{A}_{i}},\vec{1}^{\mathbf{A}%
_{i}}\}$, then $(0_{k}^{\mathbf{A}},e_{k})\in \theta _{i}^{\mathbf{A}}$ for
every $1\leq k\leq N$, so $\theta ^{\mathbf{A}}(\vec{0}^{\mathbf{A}},\vec{e}%
)\subseteq \theta $. In order to prove the other inclusion, notice that $%
\theta ^{\mathbf{A}}(1_{k}^{\mathbf{A}},e_{k})\subseteq \delta $ for every $%
k $. Therefore $\theta \cap \theta ^{\mathbf{A}}(\vec{1}^{\mathbf{A}},\vec{e}%
)=\Delta ^{\mathbf{A}}$. Because $\mathcal{V}$ has the FHP, thus factor
congruences distribute with any other (c.f.\ {\cite{bi-bu}}) and since $%
\nabla ^{\mathbf{A}}=\theta ^{\mathbf{A}}(\vec{0}^{\mathbf{A}},\vec{e})\vee
\theta ^{\mathbf{A}}(\vec{1}^{\mathbf{A}},\vec{e})$, we obtain that 
\begin{align*}
\theta & =\theta \cap \nabla ^{\mathbf{A}} \\
& =\theta \cap (\theta ^{\mathbf{A}}(\vec{0}^{\mathbf{A}},\vec{e})\vee
\theta ^{\mathbf{A}}(\vec{1}^{\mathbf{A}},\vec{e})) \\
& =(\theta \cap \theta ^{\mathbf{A}}(\vec{0}^{\mathbf{A}},\vec{e}))\vee
(\theta \cap \theta ^{\mathbf{A}}(\vec{1}^{\mathbf{A}},\vec{e})) \\
& =\theta ^{\mathbf{A}}(\vec{0}^{\mathbf{A}},\vec{e})\vee \Delta ^{\mathbf{A}%
} \\
& =\theta ^{\mathbf{A}}(\vec{0}^{\mathbf{A}},\vec{e}).
\end{align*}%
The proof for $\delta =\theta ^{\mathbf{A}}(\vec{1}^{\mathbf{A}},\vec{e})$
is similar. This completes the proof.
\end{proof}

\begin{lemma}
\label{Lemma centrals subalgebras} Let $\mathcal{V}$ be a variety with $\vec{%
0}$ and $\vec{1}$ with the FHP and let $\{\mathbf{B}_{i}\}_{i\in I}$ be a
family of members of $\mathcal{V}$ such that every subalgebra of each $%
\mathbf{B}_{i}$ is directly indecomposable. Suppose $\mathbf{B}\leq \prod 
\mathbf{B}_{i}$ is a global subdirect product, and let $\mathbf{A}\leq 
\mathbf{B}$. Then $Z(\mathbf{A})\subseteq Z(\mathbf{B})$.
\end{lemma}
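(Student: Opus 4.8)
The plan is to pick an arbitrary $\vec{e}\in Z(\mathbf{A})$ and prove $\vec{e}\in Z(\mathbf{B})$ by checking the hypothesis of Lemma \ref{Lemma Centrals global} for the global subdirect product $\mathbf{B}\leq\prod\mathbf{B}_{i}$; that is, I want to show that $\vec{e}(i)\in\{\vec{0}^{\mathbf{B}_{i}},\vec{1}^{\mathbf{B}_{i}}\}$ for every $i\in I$. Note first that each $\mathbf{B}_{i}$, being a subalgebra of itself, is directly indecomposable and in particular non-trivial, so Lemma \ref{Lemma Centrals global} will indeed be applicable.

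For a fixed $i\in I$, I would look at the subalgebra $\mathbf{A}_{i}:=\pi_{i}^{\mathbf{B}}(\mathbf{A})\leq\mathbf{B}_{i}$. The restriction of $\pi_{i}^{\mathbf{B}}$ to $\mathbf{A}$ is then a \emph{surjective} homomorphism $\mathbf{A}\rightarrow\mathbf{A}_{i}$ which sends $\vec{e}$ to $\vec{e}(i)$; since $\mathcal{V}$ has the FHP, Lemma \ref{In FHP surjections give homomorphisms} gives $\vec{e}(i)\in Z(\mathbf{A}_{i})$.

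Now $\mathbf{A}_{i}$ is directly indecomposable by hypothesis, and a directly indecomposable algebra $\mathbf{C}$ has no central elements other than $\vec{0}^{\mathbf{C}}$ and $\vec{1}^{\mathbf{C}}$: by definition a central element of $\mathbf{C}$ corresponds, under some isomorphism $\mathbf{C}\cong\mathbf{C}_{1}\times\mathbf{C}_{2}$, to $[\vec{0},\vec{1}]$, and direct indecomposability forces one of $\mathbf{C}_{1},\mathbf{C}_{2}$ to be trivial, which collapses $[\vec{0},\vec{1}]$ to either $\vec{0}^{\mathbf{C}}$ or $\vec{1}^{\mathbf{C}}$. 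Since the components of $\vec{0}$ and $\vec{1}$ are $0$-ary terms, they take the same values in $\mathbf{A}_{i}$ and in $\mathbf{B}_{i}$, so we conclude $\vec{e}(i)\in\{\vec{0}^{\mathbf{B}_{i}},\vec{1}^{\mathbf{B}_{i}}\}$; as $i$ was arbitrary, Lemma \ref{Lemma Centrals global} yields $\vec{e}\in Z(\mathbf{B})$. The argument is essentially an assembly of earlier lemmas, so I do not expect a real obstacle; the only point needing a moment's thought is the elementary fact that directly indecomposable algebras have center $\{\vec{0},\vec{1}\}$, together with the small bookkeeping that the non-triviality hypothesis of Lemma \ref{Lemma Centrals global} comes for free and that $\vec{0},\vec{1}$ are interpreted identically in a subalgebra.
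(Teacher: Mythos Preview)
Your proposal is correct and follows essentially the same argument as the paper's proof: project $\vec{e}$ onto each coordinate via the surjection $\mathbf{A}\rightarrow\pi_i(\mathbf{A})$, use Lemma~\ref{In FHP surjections give homomorphisms} and direct indecomposability of $\pi_i(\mathbf{A})\leq\mathbf{B}_i$ to force $\vec{e}(i)\in\{\vec{0}^{\mathbf{B}_i},\vec{1}^{\mathbf{B}_i}\}$, then invoke Lemma~\ref{Lemma Centrals global}. Your write-up even spells out two small points the paper leaves implicit (why the $\mathbf{B}_i$ are non-trivial and why the center of a directly indecomposable algebra is $\{\vec{0},\vec{1}\}$).
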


\begin{proof}
Let us assume that $\vec{e}\in Z(\mathbf{A})$. Then $\vec{e}\in B^{N}$. Let $%
A_{i}=\{a(i):a\in A\}$ and let $\mathbf{A}_{i}$ denotes the algebra whose
universe is $A_{i}$. Since $\mathbf{A}_{i}$ is subalgebra of $\mathbf{B}_{i}$
for every $i\in I$, then it follows that $\mathbf{A}_{i}\in \mathcal{V}_{DI}$%
. Consider the canonical projection $\pi _{i}^{\mathbf{A}}:\mathbf{A}%
\rightarrow \mathbf{A}_{i}$. Because $\pi _{i}^{\mathbf{A}}$ is onto $%
\mathbf{A}_{i}$ and $\mathcal{V}$ has the FHP, from Lemma \ref{In FHP
surjections give homomorphisms} we obtain that $\pi _{i}^{\mathbf{A}}(\vec{e}%
)\in Z(\mathbf{A}_{i})$, for every $\vec{e}\in Z(\mathbf{A})$. But $Z(%
\mathbf{A}_{i})=\{\vec{0}^{\mathbf{A}_{i}},\vec{1}^{\mathbf{A}_{i}}\}$,
therefore $\vec{e}(i)\in \{\vec{0}^{\mathbf{A}_{i}},\vec{1}^{\mathbf{A}%
_{i}}\}=\{\vec{0}^{\mathbf{B}_{i}},\vec{1}^{\mathbf{B}_{i}}\}$. Since $%
\mathbf{B}$ is a global subdirect product of non trivial algebras, from
Lemma \ref{Lemma Centrals global} we conclude that $\vec{e}\in Z(\mathbf{B})$%
.
\end{proof}

\bigskip

Now we can prove the key result.

\begin{theorem}
\label{exitencia del u} Let $\mathcal{L}$ be a language of algebras with at
least a constant symbol. Let $\mathcal{V}$ be a variety of $\mathcal{L}$%
-algebras with the FHP. Suppose that there is a universal class $\mathcal{F}%
\subseteq \mathcal{V}_{DI}$ such that every member of $\mathcal{V}$ is
isomorphic to a global subdirect product with factors in $\mathcal{F}$. Then
there exists a $(M+2)$-ary term $U(x,y,\vec{z})$ and $0$-ary terms $%
0_{1},\ldots ,0_{M},1_{1},\ldots ,1_{M}$ such that%
\begin{equation*}
\mathcal{V}\vDash U(x,y,\vec{0})=x\wedge U(x,y,\vec{1})=y
\end{equation*}
\end{theorem}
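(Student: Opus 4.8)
The plan is to first upgrade $\mathcal{V}$ to a variety with $\vec{0}$ and $\vec{1}$, then show that the central-element predicate is definable by a $(\exists\bigwedge p=q)$-formula, and finally read the term $U$ off that definition using Lemma \ref{technical lemma}.

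First I would produce $\vec{0}$ and $\vec{1}$. By Proposition 2.3 of \cite{ca-va0} (the language has a constant symbol), it suffices to check that no non-trivial member of $\mathcal{V}$ has a trivial subalgebra. If $\mathbf{A}\in\mathcal{V}$ were non-trivial with a trivial subalgebra $\{t\}$, realize $\mathbf{A}$ as a global subdirect product $\mathbf{A}\leq\prod_{i\in I}\mathbf{A}_i$ with all $\mathbf{A}_i\in\mathcal{F}$; then $I\neq\emptyset$ and for any $i_0\in I$ the set $\{t(i_0)\}$ is a trivial subalgebra of $\mathbf{A}_{i_0}$, so (as $\mathcal{F}$ is universal, hence closed under subalgebras) the trivial algebra would lie in $\mathcal{F}\subseteq\mathcal{V}_{DI}$, which is absurd. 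Along the same lines, any $\mathbf{S}\in\mathcal{V}_{SI}$, being subdirectly irreducible and a subdirect product of members of $\mathcal{F}$, is isomorphic to one of its factors, so $\mathcal{V}_{SI}\subseteq\mathcal{F}$ and hence $\mathbb{P}_u(\mathcal{V}_{SI})\subseteq\mathbb{P}_u(\mathcal{F})\subseteq\mathcal{F}\subseteq\mathcal{V}_{DI}$. Thus from now on $\mathcal{V}$ is a variety with $\vec{0},\vec{1}$ (say with $N$-tuples of $0$-ary terms) satisfying all the standing hypotheses of the lemmas above, including $\mathbb{P}_u(\mathcal{V}_{SI})\subseteq\mathcal{V}_{DI}$.

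Next I would show that homomorphisms in $\mathcal{V}$ preserve central elements. Factoring a homomorphism as a surjection followed by an inclusion, the surjection preserves central elements by Lemma \ref{In FHP surjections give homomorphisms}, and for an inclusion $\mathbf{A}\leq\mathbf{B}$ in $\mathcal{V}$ we get $Z(\mathbf{A})\subseteq Z(\mathbf{B})$ from Lemma \ref{Lemma centrals subalgebras}, since $\mathbf{B}$ is a global subdirect product of members of $\mathcal{F}$ and every subalgebra of a member of $\mathcal{F}$ is directly indecomposable ($\mathcal{F}$ being universal). By Lemma \ref{Preservation centrals} this makes ``$\vec{e}\in Z(\mathbf{A})$'' definable in $\mathcal{V}$ by a formula $\rho(\vec{z})=\exists\vec{w}\,\bigwedge_{k=1}^{n}p_k(\vec{z},\vec{w})=q_k(\vec{z},\vec{w})$. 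Note also $\vec{0}^{\mathbf{A}},\vec{1}^{\mathbf{A}}\in Z(\mathbf{A})$ for every $\mathbf{A}$ (via $\mathbf{A}\cong\mathbf{A}\times\mathbf{1}$ and $\mathbf{A}\cong\mathbf{1}\times\mathbf{A}$), so the sentences $\rho(\vec{0})$ and $\rho(\vec{1})$ hold throughout $\mathcal{V}$; as $\mathbf{F}_{\mathcal{V}}(\emptyset)\neq\emptyset$ there are tuples $\vec{v}^{\,0},\vec{v}^{\,1}$ of $0$-ary terms with $\mathcal{V}\vDash\bigwedge_k p_k(\vec{0},\vec{v}^{\,0})=q_k(\vec{0},\vec{v}^{\,0})$ and $\mathcal{V}\vDash\bigwedge_k p_k(\vec{1},\vec{v}^{\,1})=q_k(\vec{1},\vec{v}^{\,1})$.

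Finally I would extract $U$ by applying Lemma \ref{technical lemma} to these $p_k,q_k$: with $X=\{x,y,\vec{z},\vec{w}\}$, $\theta=\bigvee_{k}\theta^{\mathbf{F}_{\mathcal{V}}(X)}(p_k(\vec{z},\vec{w}),q_k(\vec{z},\vec{w}))$ and $\mathbf{H}=\mathbf{F}_{\mathcal{V}}(X)/\theta$, the element $\vec{z}/\theta$ is witnessed by $\vec{w}/\theta$ to satisfy $\rho$, so $\vec{z}/\theta\in Z(\mathbf{H})$; by Lemma \ref{Centrals in FHP}, $\theta^{\mathbf{H}}(\vec{0}^{\mathbf{H}},\vec{z}/\theta)$ and $\theta^{\mathbf{H}}(\vec{1}^{\mathbf{H}},\vec{z}/\theta)$ are complementary factor congruences, so their composition is $\nabla^{\mathbf{H}}$ and there is $u\in H$ with $u\equiv x/\theta$ modulo $\theta^{\mathbf{H}}(\vec{0}^{\mathbf{H}},\vec{z}/\theta)$ and $u\equiv y/\theta$ modulo $\theta^{\mathbf{H}}(\vec{1}^{\mathbf{H}},\vec{z}/\theta)$. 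Write $u=U_0(x/\theta,y/\theta,\vec{z}/\theta,\vec{w}/\theta)$ for a term $U_0$. For any $\mathbf{B}\in\mathcal{V}$ and $a,b\in B$, Lemma \ref{technical lemma} (using the solution $(\vec{0}^{\mathbf{B}},\vec{v}^{\,0,\mathbf{B}})$ of the $p_k=q_k$) gives a homomorphism $\Omega:\mathbf{H}\to\mathbf{B}$ with $\Omega(x/\theta)=a$, $\Omega(\vec{z}/\theta)=\vec{0}^{\mathbf{B}}$, $\Omega(\vec{w}/\theta)=\vec{v}^{\,0,\mathbf{B}}$; since $\Omega$ sends $\theta^{\mathbf{H}}(\vec{0}^{\mathbf{H}},\vec{z}/\theta)$ into $\Delta^{\mathbf{B}}$ (Lemma \ref{homomorfismos preservan}), $\Omega(u)=a$, i.e. $U_0^{\mathbf{B}}(a,b,\vec{0}^{\mathbf{B}},\vec{v}^{\,0,\mathbf{B}})=a$; symmetrically $U_0^{\mathbf{B}}(a,b,\vec{1}^{\mathbf{B}},\vec{v}^{\,1,\mathbf{B}})=b$. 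Setting $U(x,y,\vec{z},\vec{w}):=U_0(x,y,\vec{z},\vec{w})$, $M:=N+\mathrm{length}(\vec{w})$, and using $(\vec{0},\vec{v}^{\,0})$, $(\vec{1},\vec{v}^{\,1})$ as the two constant $M$-tuples, we obtain $\mathcal{V}\vDash U(x,y,\vec{0},\vec{v}^{\,0})=x\wedge U(x,y,\vec{1},\vec{v}^{\,1})=y$, as required. The hard part is the preservation argument in the middle step — the route ``homomorphisms preserve central elements $\Rightarrow$ $Z$ is $(\exists\bigwedge p=q)$-definable'' — which is where the hypothesis on global subdirect products with factors in a universal class $\mathcal{F}\subseteq\mathcal{V}_{DI}$ is genuinely used (via Lemma \ref{Lemma centrals subalgebras}); everything after that is bookkeeping, the only trick being to absorb the existential witnesses $\vec{w}$ of $\rho$ into the parameter slots of $U$ by means of $0$-ary-term witnesses, available because $\mathcal{L}$ has a constant symbol.
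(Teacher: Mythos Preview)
Your proposal is correct and follows essentially the same route as the paper's own proof: establish $\vec{0},\vec{1}$ via Proposition~2.3 of \cite{ca-va0}, check $\mathbb{P}_u(\mathcal{V}_{SI})\subseteq\mathcal{V}_{DI}$, use Lemmas~\ref{In FHP surjections give homomorphisms} and~\ref{Lemma centrals subalgebras} to verify condition~2 of Lemma~\ref{Preservation centrals}, then build $\mathbf{H}=\mathbf{F}_{\mathcal{V}}(X)/\theta$ and extract the term from the complementary factor congruences attached to $\vec{z}/\theta$, absorbing the $0$-ary witnesses into the parameter tuple. The only cosmetic difference is that you make the surjection-plus-inclusion factorization of a homomorphism explicit and track the length of $\vec{w}$ more carefully than the paper does.
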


\begin{proof}
First notice that, since no algebra of $\mathcal{F}$ has a trivial
subalgebra and every member of $\mathcal{V}$ is a subdirect product with
factors in $\mathcal{F}$, then no non-trivial algebra of $\mathcal{V}$ has a
trivial subalgebra. So, from Proposition 2.3 of \cite{ca-va0} we get that
there are unary terms $0_{1}(w),...,0_{N}(w),1_{1}(w),...,1_{N}(w)$ such that%
\begin{equation*}
\mathcal{V}\vDash \vec{0}(w)=\vec{1}(w)\rightarrow x=y\text{,}
\end{equation*}%
where $w,x,y$ are distinct variables. Let $c\in \mathcal{L}$ be a constant
symbol. Since $\mathcal{V}\vDash \vec{0}(c)=\vec{1}(c)\rightarrow x=y$, we
can redefine $\vec{0}=\vec{0}(c)$ and $\vec{1}=\vec{1}(c)$ to get that $%
\mathcal{V}$ is a variety with $\vec{0}$ and $\vec{1}$. \newline

Next, we will prove that

\begin{enumerate}
\item[(1)] $\mathbb{P}_{u}(\mathcal{V}_{SI})\subseteq \mathcal{V}_{DI}$.
\end{enumerate}

Since every algebra of $\mathcal{V}$ is a subdirect product with factors in $%
\mathcal{F}$, we have that $\mathcal{V}_{SI}\subseteq \mathcal{F}$. So we
have that $\mathbb{P}_{u}(\mathcal{V}_{SI})\subseteq \mathbb{P}_{u}(\mathcal{%
F})\subseteq \mathcal{F}\subseteq \mathcal{V}_{DI}$, which proves (1).

Since by hypothesis we have that every member of $\mathcal{V}$ is isomorphic
to a global subdirect product with factors in $\mathcal{F}$ and $\mathbb{S}(%
\mathcal{F})\subseteq \mathcal{F}\subseteq \mathcal{V}_{DI}$, Lemma \ref%
{Lemma centrals subalgebras} says that

\begin{enumerate}
\item[(2)] If $\mathbf{A}\leq \mathbf{B}\in $ $\mathcal{V}$, then $Z(\mathbf{%
A})\subseteq Z(\mathbf{B})$.
\end{enumerate}

But (2) and Lemma \ref{In FHP surjections give homomorphisms} say that 2. of
Lemma \ref{Preservation centrals} holds, which implies that the property $"%
\vec{e}\in Z(\mathbf{A})"$ is definable in $\mathcal{V}$ by a $(\exists
\bigwedge p=q)$-formula. Let%
\begin{equation*}
\phi (\vec{z})=\exists \vec{w}\bigwedge_{k=1}^{n}p_{k}(\vec{z},\vec{w}%
)=q_{k}(\vec{z},\vec{w})
\end{equation*}%
define the property $``\vec{e}\in Z(\mathbf{A})"$ in $\mathcal{V}$.

Due to in $\mathcal{L}$ there is at least a constant symbol, there exists $%
\mathbf{F}_{\mathcal{V}}\mathbf{(\emptyset )}$. Since $\vec{0},\vec{1}\in Z(%
\mathbf{F}_{\mathcal{V}}\mathbf{(\emptyset )})$, there are $0$-ary terms $%
c_{1},...,c_{N},d_{1},...d_{N}$, such that $\mathbf{F}_{\mathcal{V}}\mathbf{%
(\emptyset )}\models \bigwedge_{k=1}^{n}p_{k}(\vec{0},\vec{c})=q_{k}(\vec{0},%
\vec{c})$ and $\mathbf{F}_{\mathcal{V}}\mathbf{(\emptyset )}\models
\bigwedge_{k=1}^{n}p_{k}(\vec{1},\vec{d})=q_{k}(\vec{1},\vec{d})$. This says
that

\begin{enumerate}
\item[(3)] For every $\mathbf{A}\in \mathcal{V}$,%
\begin{eqnarray*}
p_{k}(\vec{0}^{\mathbf{A}},\vec{c}^{\mathbf{A}}) &=&q_{k}(\vec{0}^{\mathbf{A}%
},\vec{c}^{\mathbf{A}})\text{, }k=1,...,n \\
p_{k}(\vec{1}^{\mathbf{A}},\vec{d}^{\mathbf{A}}) &=&q_{k}(\vec{1}^{\mathbf{A}%
},\vec{d}^{\mathbf{A}})\text{, }k=1,...,n
\end{eqnarray*}
\end{enumerate}

Let $X=\{x,y,\vec{z},\vec{w}\}$ and $\theta =\bigvee_{k=1}^{n}\theta ^{%
\mathbf{\mathbf{F}_{\mathcal{V}}}(X)}(p_{k}(\vec{z},\vec{w}),q_{k}(\vec{z},%
\vec{w}))$. Let $\mathbf{H}=\mathbf{F}_{\mathcal{V}}(X)/\theta $. Since $%
p_{k}^{\mathbf{H}}(\vec{z}/\theta ,\vec{w}/\theta )=q_{k}^{\mathbf{H}}(\vec{z%
}/\theta ,\vec{w}/\theta )$, for $k=1,...,n$, we have that $\mathbf{H}%
\models \phi (\vec{z}/\theta )$. That is to say, $\vec{z}/\theta \in Z(%
\mathbf{H})$. \newline
\newline
\noindent Thereby, since $(x/\theta ,y/\theta )\in \nabla ^{\mathbf{H}%
}=\theta ^{\mathbf{H}}(\vec{z}/\theta ,\vec{0}/\theta )\circ \theta ^{%
\mathbf{H}}(\vec{z}/\theta ,\vec{1}/\theta )$, there is a term $t(x,y,\vec{z}%
,\vec{w})$, such that

\begin{equation*}
(x/\theta ,t^{\mathbf{\mathbf{F}_{\mathcal{V}}}(X)}(x,y,\vec{z},\vec{w}%
)/\theta )\in \theta ^{\mathbf{H}}(\vec{z}/\theta ,\vec{0}/\theta )
\end{equation*}%
and 
\begin{equation*}
(t^{\mathbf{\mathbf{F}_{\mathcal{V}}}(X)}(x,y,\vec{z},\vec{w})/\theta
,y/\theta )\in \theta ^{\mathbf{H}}(\vec{z}/\theta ,\vec{1}/\theta ).
\end{equation*}

\noindent Hence we have

\begin{enumerate}
\item[(4)] $(x/\theta ,t^{\mathbf{H}}(x/\theta ,y/\theta ,\vec{z}/\theta ,%
\vec{w}/\theta ))\in \theta ^{\mathbf{H}}(\vec{z}/\theta ,\vec{0}/\theta )$

\item[(5)] $(t^{\mathbf{H}}(x/\theta ,y/\theta ,\vec{z}/\theta ,\vec{w}%
/\theta ),y/\theta )\in \theta ^{\mathbf{H}}(\vec{z}/\theta ,\vec{1}/\theta
) $
\end{enumerate}

\noindent We will prove that

\begin{enumerate}
\item[(6)] $\mathcal{V}\vDash t(x,y,\vec{0},\vec{c})=x\wedge t(x,y,\vec{1},%
\vec{d})=y$
\end{enumerate}

Let $\mathbf{A}\in \mathcal{V}$ and $a,b\in A$. From (3) and Lemma \ref%
{technical lemma}, there exists a unique $\Omega :\mathbf{H}\rightarrow 
\mathbf{A}$ such that $\Omega (x/\theta )=a$, $\Omega (y/\theta )=b$, $%
\Omega (\vec{z}/\theta )=\vec{0}^{\mathbf{A}}$ and $\Omega (\vec{w}/\theta )=%
\vec{c}^{\mathbf{A}}$. Hence, from (4) and Lemma \ref{homomorfismos
preservan}, we obtain that $t^{\mathbf{A}}(a,b,\vec{0}^{\mathbf{A}},\vec{c}^{%
\mathbf{A}})=a$. In a similar fashion, again from (3) and Lemma \ref%
{technical lemma} applied to (5) together with Lemma \ref{homomorfismos
preservan}, we obtain that $t^{\mathbf{A}}(a,b,\vec{1}^{\mathbf{A}},\vec{d}^{%
\mathbf{A}})=b$ and so (6) is proved.

To conclude the proof we can redefine $\vec{0}%
=(0_{1},...,0_{N},c_{1},...,c_{N})$ and $\vec{1}%
=(1_{1},...,1_{N},d_{1},...,d_{N})$ and take $%
U(x,y,z_{1},...,z_{2N})=t(x,y,z_{1},...,z_{2N})$.
\end{proof}

\bigskip

\section{Pierce stalks}

We shall use the above theorem and some results of \cite{va4} to give
important information on the Pierce stalks for preprimal varieties
corresponding to Rosemberg types 6. and 7. of the introduction. An $h$-ary
relation $\sigma $ on a finite set $P$ is \textit{central} if

\begin{enumerate}
\item[-] it is \textit{totally symmetric}, that is, for all $\vec{a}\in
\sigma $, if $\pi $ is a permutation of $\{1,\ldots ,h\}$, then $(a_{\pi
(1)},\ldots ,a_{\pi (h)})\in \sigma $.

\item[-] it is \textit{totally reflexive}, that is, for all $\vec{a}\in
P^{h} $ with at least two of the $a_{i}$ equal, we have that $\vec{a}\in
\sigma $.

\item[-] there is an $a_{1}$ such that for all $a_{2},\ldots ,a_{h}$ in $P$
we have $\vec{a}\in \sigma $; and

\item[-] $\sigma \neq P^{h}$.
\end{enumerate}

\noindent If $\sigma $ is a central relation on a set $P$, let $\mathbf{P}%
_{\sigma }$ be the preprimal algebra whose universe is $P$ and whose
fundamental operations are all the operations preserving $\sigma $. We use $%
\mathbb{V}(\mathbf{P}_{\sigma })$ to denote the variety generated by $%
\mathbf{P}_{\sigma }$.

\begin{proposition}
Let $\sigma $ be a $h$-ary central relation on $P$, with $h\geq 3$. There is
no universal class $\mathcal{F}\subseteq \mathbb{V}(\mathbf{P}_{\sigma
})_{DI}$ such that every member of $\mathbb{V}(\mathbf{P}_{\sigma })$ is
isomorphic to a global subdirect product with factors in $\mathcal{F}$.
There are Pierce stalks in $\mathbb{V}(\mathbf{P}_{\sigma })$ which are not
directly indecomposable.
\end{proposition}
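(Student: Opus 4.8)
The plan is to prove the first assertion via the contrapositive of Theorem~\ref{exitencia del u}, reducing it to an elementary combinatorial fact about $\sigma$, and then to obtain the second assertion from the first together with the machinery of \cite{va4}.

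\emph{First assertion.} Recall that $\mathbb{V}(\mathbf{P}_{\sigma})$ has the FHP and that its language contains a constant symbol, so Theorem~\ref{exitencia del u} is applicable; hence it suffices to show that there is \emph{no} $(M+2)$-ary term $U(x,y,\vec{z})$ and \emph{no} $0$-ary terms $0_{1},\dots ,0_{M},1_{1},\dots ,1_{M}$ with $\mathbb{V}(\mathbf{P}_{\sigma})\vDash U(x,y,\vec{0})=x\wedge U(x,y,\vec{1})=y$. Suppose, towards a contradiction, such a $U$ and such terms existed. Since $\mathbf{P}_{\sigma}$ generates the variety, both identities hold in $\mathbf{P}_{\sigma}$; writing $\vec{0},\vec{1}\in P^{M}$ for the evaluations in $\mathbf{P}_{\sigma}$ of the two tuples of $0$-ary terms, we get $U^{\mathbf{P}_{\sigma}}(a,b,\vec{0})=a$ and $U^{\mathbf{P}_{\sigma}}(a,b,\vec{1})=b$ for all $a,b\in P$. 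Moreover $U^{\mathbf{P}_{\sigma}}$, being a term operation of the preprimal algebra $\mathbf{P}_{\sigma}$ (whose clone is exactly the set of operations preserving $\sigma$), preserves $\sigma$.

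Now I would contradict the preservation of $\sigma$ by $U^{\mathbf{P}_{\sigma}}$. Since $\sigma\neq P^{h}$ and $\sigma$ is totally reflexive, fix $\vec{d}=(d_{1},\dots ,d_{h})\in P^{h}\setminus\sigma$; its entries are pairwise distinct. Consider the $h\times(M+2)$ matrix whose first column is the constant tuple $(d_{1},\dots ,d_{1})$, whose second column is $(d_{2},d_{2},d_{3},d_{4},\dots ,d_{h})$, and whose $(2+j)$-th column, for $1\leq j\leq M$, is $(0_{j},1_{j},1_{j},\dots ,1_{j})$ with $1_{j}$ occurring $h-1$ times. Every column belongs to $\sigma$: the first is constant, the second has $d_{2}$ repeated, and each of the last $M$ columns is either constant (when $0_{j}=1_{j}$) or has the entry $1_{j}$ repeated $h-1$ times --- and $h-1\geq 2$ precisely because $h\geq 3$, which is the only place this hypothesis is used. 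Applying $U^{\mathbf{P}_{\sigma}}$ row by row, the first row $(d_{1},d_{2},\vec{0})$ gives $d_{1}$ and the $i$-th row $(d_{1},d_{i},\vec{1})$ for $i\geq 2$ gives $d_{i}$, so the resulting $h$-tuple is $\vec{d}\notin\sigma$, contradicting preservation of $\sigma$. By the contrapositive of Theorem~\ref{exitencia del u}, the required universal class $\mathcal{F}$ cannot exist.

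\emph{Second assertion.} Since no term $U$ as above exists, Lemma~\ref{Preservation centrals} shows that the property ``$\vec{e}\in Z(\mathbf{A})$'' is not existential-positive definable in $\mathbb{V}(\mathbf{P}_{\sigma})$; equivalently, homomorphisms in $\mathbb{V}(\mathbf{P}_{\sigma})$ do not in general preserve central elements. I would then invoke the analysis of Pierce stalks from \cite{va4}: in a variety with $\vec{0}$, $\vec{1}$ and the FHP in which central elements are not preserved by homomorphisms, some algebra has a Pierce stalk possessing a central element outside $\{\vec{0},\vec{1}\}$, hence a directly decomposable Pierce stalk; concretely such a witness can be taken to be a suitable reduced power of $\mathbf{P}_{\sigma}$ (for instance, the stalk at $\infty$ of the algebra of $\mathbf{P}_{\sigma}$-valued sequences converging along the one-point compactification of $\mathbb{N}$), the new central element being produced by the same ``partial $U$'' obstruction exploited in the first part. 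The combinatorics of the first assertion is routine once Theorem~\ref{exitencia del u} is available; the genuine inputs are that $\mathbb{V}(\mathbf{P}_{\sigma})$ has the FHP and, for the second assertion, the structure result of \cite{va4} that converts non-preservation of central elements into an explicit directly decomposable Pierce stalk. I expect this last conversion to be the main obstacle: one must exhibit a concrete algebra in $\mathbb{V}(\mathbf{P}_{\sigma})$ together with an ultrafilter of its Boolean algebra of central elements at which the stalk acquires a new central element, and verify that the resulting factorisation is proper.
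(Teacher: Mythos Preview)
Your argument for the first assertion is correct and follows the same strategy as the paper: apply the contrapositive of Theorem~\ref{exitencia del u} by showing that no operation $U$ on $P$ satisfying $U(x,y,\vec{0})=x$ and $U(x,y,\vec{1})=y$ can preserve~$\sigma$. Your $h\times(M+2)$ matrix is in fact tidier than the paper's (which alternates $\vec{a}$ and $\vec{b}$ in blocks of two rows), and it isolates the role of $h\geq 3$ in exactly one place.

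For the second assertion, however, there is a genuine gap. You try to pass from ``no $U$ exists'' to ``central elements are not preserved by homomorphisms'' via Lemma~\ref{Preservation centrals}, and then from there to a decomposable Pierce stalk via an unspecified result in \cite{va4}. Two problems: first, Lemma~\ref{Preservation centrals} requires the hypothesis $\mathbb{P}_{u}(\mathcal{V}_{SI})\subseteq\mathcal{V}_{DI}$, which you never verify for $\mathbb{V}(\mathbf{P}_{\sigma})$ when $h\geq 3$ (and the lemma does not mention $U$ at all; the link between existence of $U$ and $(\exists\bigwedge p=q)$-definability of centrality is buried in the \emph{proof} of Theorem~\ref{exitencia del u}, not in that lemma). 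Second, and more seriously, the passage from non-preservation of central elements to the existence of a decomposable Pierce stalk is not a quotable result of \cite{va4}; you yourself flag this as ``the main obstacle'' and sketch only a heuristic construction.

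The paper avoids all of this with a two-line argument you are missing: by \cite{bi-bu}, if every Pierce stalk in $\mathbb{V}(\mathbf{P}_{\sigma})$ were directly indecomposable, then $\mathbb{V}(\mathbf{P}_{\sigma})_{DI}$ would be a universal class; since the Pierce representation is always a global subdirect representation, taking $\mathcal{F}=\mathbb{V}(\mathbf{P}_{\sigma})_{DI}$ would then contradict the first assertion. So the second assertion follows immediately from the first once you invoke the Bigelow--Burris result, with no need to analyze central elements or build explicit stalks.
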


\begin{proof}
First, we note that by \cite{de} the variety $\mathbb{V}(\mathbf{P}_{\sigma
})$ is congruence distributive, hence it has the FHP (see \cite{FH1970}).
Let $a_{1},\ldots ,a_{N},b_{1},\ldots ,b_{N}\in P$ be such that $%
(a_{1},\ldots ,a_{N})\neq (b_{1},\ldots ,b_{N})$. Let $U:P^{N+2}\rightarrow
P $ be such that%
\begin{equation*}
\begin{array}{ccc}
U(x,y,a_{1},\ldots ,a_{N}) & = & x\medskip \\ 
U(x,y,b_{1},\ldots ,b_{N}) & = & y%
\end{array}%
\end{equation*}%
for any $x,y\in P$. It is easy to check that $U$ does not preserve $\sigma $%
. For example, if $h$ is odd, we can take $(c_{1},\ldots ,c_{h})\notin
\sigma $ and note that%
\begin{equation*}
\begin{array}{rcl}
U(c_{1},c_{2},a_{1},\ldots ,a_{N}) & = & c_{1}\medskip \\ 
U(c_{1},c_{2},b_{1},\ldots ,b_{N}) & = & c_{2}\medskip \\ 
U(c_{3},c_{4},a_{1},\ldots ,a_{N}) & = & c_{3}\medskip \\ 
U(c_{3},c_{4},b_{1},\ldots ,b_{N}) & = & c_{4}\medskip \\ 
\vdots \ \ \ \ \ \ \ \ \ \ \ \ \ \ \ \  &  & \ \vdots \medskip \\ 
U(c_{h-1},c_{h},a_{1},\ldots ,a_{N}) & = & c_{h-1}\medskip \\ 
U(c_{h-1},c_{h},b_{1},\ldots ,b_{N}) & = & c_{h}%
\end{array}%
\end{equation*}%
which, since $\sigma $ is totally reflexive, says that $U$ does not preserve 
$\sigma $. Thus the conclusion of Theorem \ref{exitencia del u} does not
hold and hence there is no universal class $\mathcal{F}\subseteq \mathbb{V}(%
\mathbf{P}_{\sigma })_{DI}$ such that every member of $\mathbb{V}(\mathbf{P}%
_{\sigma })$ is isomorphic to a global subdirect product with factors in $%
\mathcal{F}$. In \cite{bi-bu} it is proved that if every Pierce stalk is in $%
\mathbb{V}(\mathbf{P}_{\sigma })_{DI}$, then the class $\mathbb{V}(\mathbf{P}%
_{\sigma })_{DI}$ is universal. Since the Pierce representation is always a
global subdirect representation, there are Pierce stalks in $\mathbb{V}(%
\mathbf{P}_{\sigma })$ which are not directly indecomposable.
\end{proof}

\bigskip

In order to analyze the case of a $2$-ary central relation we need the
following lemma from \cite{va1}.

\begin{lemma}
\label{weak implica permutable}Let $\mathcal{V}$ be a congruence
distributive variety. If each member of $\mathcal{V}$ is isomorphic to a
global subdirect product with factors in $\mathcal{V}_{SI}$, then $\mathcal{V%
}$ is congruence permutable.
\end{lemma}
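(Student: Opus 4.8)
Let $\mathcal{V}$ be a congruence distributive variety. If each member of $\mathcal{V}$ is isomorphic to a global subdirect product with factors in $\mathcal{V}_{SI}$, then $\mathcal{V}$ is congruence permutable.

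**The plan.** The strategy is to locate a Mal'cev-like term via Theorem~\ref{exitencia del u} and then massage it into an actual Mal'cev term using the congruence distributivity hypothesis. First I would observe that congruence distributivity, together with the Fraser-Horn property that it implies, makes $\mathcal{V}_{SI}$ coincide with $\mathcal{V}_{DI}$ in the relevant sense: since $\mathcal{V}$ is congruence distributive every $\mathbf{A}\in\mathcal{V}_{SI}$ is directly indecomposable, and conversely one needs to check that the class $\mathcal{F}=\mathbb{S}\mathbb{P}_u(\mathcal{V}_{SI})$ (which is universal) is contained in $\mathcal{V}_{DI}$ — this uses that $\mathbb{P}_u$ of subdirectly irreducibles in a congruence distributive variety are again subdirectly irreducible by J\'onsson's lemma, hence directly indecomposable. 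The point of all this is to put ourselves in position to apply Theorem~\ref{exitencia del u}: the hypothesis gives us a universal class $\mathcal{F}\subseteq\mathcal{V}_{DI}$ such that every member of $\mathcal{V}$ is isomorphic to a global subdirect product with factors in $\mathcal{F}$, provided the language has a constant symbol.

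**Handling the language.** If $\mathcal{L}$ has no constant symbol, the clean move is to pass to a term-equivalent expansion: pick any element and name it, or more carefully, work in $\mathbf{F}_\mathcal{V}(\{w\})$ and use the unary $\vec 0(w),\vec 1(w)$ terms as in the proof of Theorem~\ref{exitencia del u}. Congruence permutability is preserved under term-equivalence, so it suffices to treat the case where $\mathcal{L}$ has a constant. Then Theorem~\ref{exitencia del u} yields a term $U(x,y,\vec z)$ and constant tuples $\vec 0,\vec 1$ with $\mathcal{V}\vDash U(x,y,\vec 0)=x$ and $\mathcal{V}\vDash U(x,y,\vec 1)=y$.

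**From the parametrized term to a Mal'cev term.** This is the step I expect to be the main obstacle, and the place where congruence distributivity is genuinely needed (it cannot be dropped — Boolean algebras, say, satisfy the hypotheses of the excerpt's earlier lemmas but are not congruence permutable, so the distributivity assumption is doing real work). The idea is that $U(x,y,\vec z)$ behaves like a "switching" term between $x$ and $y$ controlled by the parameter $\vec z$, and in a congruence distributive variety the component $\theta^{\mathbf{A}}(\vec 0,\vec e)$ and $\theta^{\mathbf{A}}(\vec 1,\vec e)$ of a central element are complementary factor congruences that must \emph{permute} (factor congruences always permute). Concretely, I would try to define $p(x,y,v)$ by substituting for the parameters $\vec z$ a tuple of binary terms in $x$ and $v$ (resp.\ involving $y$) chosen so that when $v=x$ the parameter tuple becomes $\vec 0$ and when $v=y$ it becomes $\vec 1$, or use a J\'onsson-chain argument: congruence distributivity gives J\'onsson terms $t_0,\dots,t_n$, and one combines these with $U$ to collapse the "distance" between $x$ and $y$ in one step. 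Alternatively — and this may be cleaner — invoke the known characterization that a congruence distributive variety is congruence permutable (hence arithmetical) iff it has a term satisfying the Mal'cev identities, and produce that term by a direct computation in a suitable free algebra: take $\mathbf{H}=\mathbf{F}_\mathcal{V}(x,y)$, note $(x,y)\in\nabla^{\mathbf{H}}$, and using that $\mathbf{H}$ embeds in a global subdirect product of directly indecomposables on which factor congruences permute, extract via the Patchwork Property a term witnessing $\theta^{\mathbf{H}}(x,z)\circ\theta^{\mathbf{H}}(z,y)$-decomposition, then globalize. The heart of the argument is exactly the interplay already displayed in the proof of Lemma~\ref{Lemma Centrals global}: the Patchwork Property lets one "patch" across a partition of the index set into the zero-part and the one-part, and congruence distributivity guarantees the resulting congruences are complementary factor congruences and therefore permute, which is precisely congruence permutability of $\mathbf{H}$; since free algebras generate the variety, $\mathcal{V}$ is congruence permutable.
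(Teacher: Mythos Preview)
The paper does not prove this lemma at all: it simply cites (1)$\Rightarrow$(2) of Corollary~1 of Theorem~3.4 in \cite{va1}. So your proposal is necessarily a different route, and it has a genuine gap.

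Your plan is to apply Theorem~\ref{exitencia del u} to obtain a term $U(x,y,\vec z)$ with $U(x,y,\vec 0)=x$ and $U(x,y,\vec 1)=y$, and then ``massage it into an actual Mal'cev term using the congruence distributivity hypothesis.'' That last step cannot work as stated, because the conclusion of Theorem~\ref{exitencia del u} together with congruence distributivity does \emph{not} imply congruence permutability. Bounded distributive lattices are a counterexample: with $\vec 0=(0,1)$, $\vec 1=(1,0)$ and $U(x,y,z_1,z_2)=(x\wedge z_2)\vee(y\wedge z_1)$ one has $U(x,y,\vec 0)=x$ and $U(x,y,\vec 1)=y$, the variety is congruence distributive, yet it is not congruence permutable. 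So once you have extracted $U$, the specific hypothesis that every algebra is a global subdirect product of \emph{subdirectly irreducible} factors has been thrown away, and from that point on you cannot recover permutability. Your final paragraph gestures at using the hypothesis more directly via free algebras and the Patchwork Property, but nothing concrete is carried out there; that is precisely where the real work lies, and it is what the cited result in \cite{va1} supplies.

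Two smaller errors worth flagging: J\'onsson's lemma does not assert that ultraproducts of subdirectly irreducibles are subdirectly irreducible (it bounds $\mathcal{V}_{SI}$ from above by $\mathbb{HSP}_u(\mathcal{K})$, which is a different statement), so your justification that $\mathcal{F}\subseteq\mathcal{V}_{DI}$ is not correct as written. And your parenthetical that Boolean algebras ``are not congruence permutable'' is false; Boolean algebras are arithmetical.
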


\begin{proof}
It follows from (1)$\Rightarrow $(2) of \cite[Corollary. 1 of Thm. 3.4]{va1}.
\end{proof}

\begin{proposition}
Let $\sigma $ be a $2$-ary central relation on a set $P$. Every Pierce stalk
in $\mathbb{V}(\mathbf{P}_{\sigma })$ is directly indecomposable. There are
Pierce stalks in $\mathbb{V}(\mathbf{P}_{\sigma })$ which are not
subdirectly irreducible.
\end{proposition}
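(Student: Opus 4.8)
The plan is to establish the two assertions separately, in each case reducing to a short polymorphism computation on the finite set $P$.

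\textbf{Direct indecomposability of the stalks.} The strategy is to show that $\mathbb{V}(\mathbf{P}_\sigma)$ meets the hypotheses needed to apply, via \cite{va4}, a converse to the mechanism behind Theorem~\ref{exitencia del u}. Fix an element $p_0\in P$ witnessing that $\sigma$ is central, so $(p_0,a)\in\sigma$ for every $a\in P$ (hence also $(a,p_0)\in\sigma$, by total symmetry); and, since $\sigma\neq P^2$, fix $p_1,p_2\in P$ with $(p_1,p_2)\notin\sigma$. By total reflexivity $p_1\neq p_2$, and since $p_0$ lies in no non-edge of $\sigma$ we get $p_1,p_2\neq p_0$. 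Define $U\colon P^3\to P$ by $U(a,b,c)=a$ if $c=p_1$, $U(a,b,c)=b$ if $c=p_2$, and $U(a,b,c)=p_0$ otherwise; this is well defined ($p_1\neq p_2$) and $U(a,b,p_1)=a$, $U(a,b,p_2)=b$. The key claim is that $U$ preserves $\sigma$: if not, some $\sigma$-related columns $(a_1,a_2),(b_1,b_2),(c_1,c_2)$ have image $(U(a_1,b_1,c_1),U(a_2,b_2,c_2))\notin\sigma$; since that image has both coordinates $\neq p_0$, necessarily $c_1,c_2\in\{p_1,p_2\}$, and the four possibilities for $(c_1,c_2)$ all give a contradiction (two make $(c_1,c_2)$ itself equal to $(p_1,p_2)$ or $(p_2,p_1)$, the other two force $(a_1,a_2)$ or $(b_1,b_2)$ out of $\sigma$). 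Thus $U$ is a term operation of $\mathbf{P}_\sigma$. Since every constant operation on $P$ preserves $\sigma$, the elements $p_1,p_2$ are values of $0$-ary terms; writing $\vec 0=p_1$, $\vec 1=p_2$ we obtain $\mathbb{V}(\mathbf{P}_\sigma)\vDash U(x,y,\vec 0)=x\wedge U(x,y,\vec 1)=y$, which in particular makes $\mathbb{V}(\mathbf{P}_\sigma)$ a variety with $\vec 0$ and $\vec 1$ (from $p_1=p_2$ we get $x=U(x,y,p_1)=U(x,y,p_2)=y$). Since $\mathbb{V}(\mathbf{P}_\sigma)$ is congruence distributive (it has a majority term: the ternary operation returning the majority value when two arguments agree and $p_0$ otherwise preserves $\sigma$ by a pigeonhole argument), it has the FHP by \cite{FH1970}, and the relevant result of \cite{va4} — a variety with $\vec 0$, $\vec 1$ and the FHP admitting such a term $U$ has all its Pierce stalks directly indecomposable — closes this part.

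\textbf{Some stalk is not subdirectly irreducible.} Argue by contradiction: suppose every Pierce stalk in $\mathbb{V}(\mathbf{P}_\sigma)$ is subdirectly irreducible. The Pierce representation exhibits each member of $\mathbb{V}(\mathbf{P}_\sigma)$ as a global subdirect product of its Pierce stalks, so under this assumption every member of $\mathbb{V}(\mathbf{P}_\sigma)$ is isomorphic to a global subdirect product with factors in $\mathbb{V}(\mathbf{P}_\sigma)_{SI}$. Since $\mathbb{V}(\mathbf{P}_\sigma)$ is congruence distributive, Lemma~\ref{weak implica permutable} then forces it to be congruence permutable, i.e.\ to admit a Maltsev term. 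But no $\sigma$-preserving operation on $P$ is Maltsev: if $m$ satisfied $m(a,a,c)=c$ and $m(a,c,c)=a$, evaluating it on the three columns $(p_1,p_0),(p_0,p_0),(p_0,p_2)$ — each in $\sigma$ since $p_0$ is central — yields $(m(p_1,p_0,p_0),m(p_0,p_0,p_2))=(p_1,p_2)\notin\sigma$. As the term operations of $\mathbf{P}_\sigma$ are exactly the operations preserving $\sigma$, $\mathbb{V}(\mathbf{P}_\sigma)$ has no Maltsev term, contradicting congruence permutability. Hence some Pierce stalk fails to be subdirectly irreducible (while still being directly indecomposable, by the first part).

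\textbf{Main obstacle.} The one non-routine point is pinning down and correctly quoting the result of \cite{va4} used in the first part, namely the implication ``existence of the mixing term $U$ $\Rightarrow$ trivial Boolean algebra of central elements in every Pierce stalk'', which is the reverse direction of the chain exploited in the preceding proposition. The other ingredients — that the explicit $U$ and the obstruction to a Maltsev operation behave as claimed — are short, self-contained computations, and the only auxiliary fact needed (Pierce stalks of nontrivial algebras in a variety with $\vec 0$ and $\vec 1$ are again nontrivial, so that ``all stalks $SI$'' genuinely feeds Lemma~\ref{weak implica permutable}) is standard.
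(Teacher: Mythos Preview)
Your argument is correct and in both halves takes a more elementary route than the paper. For the first part, the paper appeals to \cite[X.5.4]{kn} for ring-like term operations $+,\times,0,1$ and then sets $U(x,y,z,w)=(x\times w)+(y\times z)$; your explicit three-variable $U$ built from a central element $p_0$ and a non-edge $(p_1,p_2)$ achieves the same thing with a self-contained polymorphism check, and your direct construction of a majority term replaces the paper's citation of \cite{de} for congruence distributivity. For the second part, the paper goes a step further than you: from congruence permutability it invokes Pixley's theorem to conclude $\mathbf{P}_\sigma$ is quasiprimal, and then observes that the ternary discriminator does not preserve $\sigma$. Your shortcut---stopping at congruence permutability and directly exhibiting that no Maltsev operation preserves $\sigma$ via the triple $(p_1,p_0),(p_0,p_0),(p_0,p_2)$---is cleaner and avoids the extra machinery.

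The one point to tighten is exactly the one you flag. The implication you need from \cite{va4} (their Theorem~8, $(2)\Rightarrow(1)$) does not follow from the existence of $U$ alone: in addition to $\mathbb{V}(\mathbf{P}_\sigma)$ being a Pierce variety, one needs $\mathbb{P}_u\mathbb{S}\bigl(\mathbb{V}(\mathbf{P}_\sigma)_{SI}\bigr)\subseteq\mathbb{V}(\mathbf{P}_\sigma)_{DI}$. This is what the paper checks just before invoking \cite{va4}, and it is immediate here as well: since every constant on $P$ is a term operation, $\mathbb{S}(\mathbf{P}_\sigma)=\{\mathbf{P}_\sigma\}$; by \cite{de} (or by J\'onsson's lemma together with your majority term) $\mathbf{P}_\sigma$ is the unique subdirectly irreducible; and $\mathbf{P}_\sigma$ is finite, so $\mathbb{P}_u\mathbb{S}\bigl(\mathbb{V}(\mathbf{P}_\sigma)_{SI}\bigr)=\{\mathbf{P}_\sigma\}\subseteq\mathbb{V}(\mathbf{P}_\sigma)_{DI}$. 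Add this sentence and the citation is pinned down.
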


\begin{proof}
It is an exercise \cite[X.5.4]{kn} to check that there are term-operations $+
$ and $\times $ on $\mathbf{P}_{\sigma }$, and elements $0,1\in P$ such that 
$\mathbf{P}_{\sigma }$ satisfies the following identities%
\begin{equation*}
\begin{array}{r}
x\times 0=0\times x=0\medskip  \\ 
x\times 1=1\times x=x\medskip  \\ 
x+0=0+x=x\medskip 
\end{array}%
\end{equation*}%
Since $\sigma $ is reflexive, all constant functions on $P$ preserve $\sigma 
$, which says that there are terms $0(w)$ and $1(w)$ such that $0^{\mathbf{P}%
_{\sigma }}(a)=0$ and $1^{\mathbf{P}_{\sigma }}(a)=1$, for every $a\in P$.
Let $U(x,y,z,w)$ be the term $(x\times w)+(y\times z)$. Note that%
\begin{equation*}
\begin{array}{ccc}
U(x,y,0(w),1(w)) & = & x\medskip  \\ 
U(x,y,1(w),0(w)) & = & y%
\end{array}%
\end{equation*}%
are identities of $\mathbb{V}(\mathbf{P}_{\sigma })$. Thus, in the
terminology of \cite{va4}, $\mathbb{V}(\mathbf{P}_{\sigma })$ is a Pierce
variety. Also we note that, since all constant functions on $P$ are
term-operations, we have that $\mathbb{S(}\mathbf{P}_{\sigma })=\{\mathbf{P}%
_{\sigma }\}$. By \cite{de} $\mathbf{P}_{\sigma }$ is the only subdirectly
irreducible algebra in $\mathbb{V}(\mathbf{P}_{\sigma })$. Now, (2)$%
\Rightarrow $(1) of \cite[Theorem 8]{va4} says that every Pierce stalk is
directly indecomposable.

Suppose every Pierce stalk of $\mathbb{V}(\mathbf{P}_{\sigma })$ is
subdirectly irreducible. We will arrive to a contradiction. Since $\mathbf{P}%
_{\sigma }$ is the only subdirectly irreducible algebra in $\mathbb{V}(%
\mathbf{P}_{\sigma })$, we have that $\mathbf{P}_{\sigma }$ is hereditarily
simple. By \cite{de} the variety $\mathbb{V}(\mathbf{P}_{\sigma })$ is
congruence distributive. By Lemma \ref{weak implica permutable} we have that 
$\mathbb{V}(\mathbf{P}_{\sigma })$ is arithmetical. Pixley theorem \cite[%
IV.10.7]{bu-sa} says that $\mathbf{P}_{\sigma }$ is quasiprimal, i.e. the
ternary discriminator is a term-function of $\mathbf{P}_{\sigma }$. It is
easy to check that the ternary discriminator does not preserve $\sigma $.
Thus we have arrived to a contradiction and hence we have proved that there
are Pierce stalks in $\mathbb{V}(\mathbf{P}_{\sigma })$ which are not
subdirectly irreducible.
\end{proof}

\bigskip

Next, we analyze the preprimal variety given by a non-trivial proper
equivalence relation $\sigma $ on a finite set $P$. Let $\mathbf{P}_{\sigma
} $ be the preprimal algebra whose universe is $P$ and whose fundamental
operations are all the operations preserving $\sigma $.

\begin{proposition}
Let $\sigma $ be a non-trivial proper equivalence relation on a finite set $%
P $. Every Pierce stalk in $\mathbb{V}(\mathbf{P}_{\sigma })$ is directly
indecomposable. There are Pierce stalks in $\mathbb{V}(\mathbf{P}_{\sigma })$
which are not subdirectly irreducible.
\end{proposition}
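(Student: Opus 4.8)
The plan is to run the argument of the previous proposition, adapting it to the facts that $\mathbb{V}(\mathbf{P}_{\sigma})$ now has \emph{two} subdirectly irreducible algebras rather than one, and that the non--subdirectly--irreducible Pierce stalk can here be exhibited explicitly instead of obtained by a contradiction. First I would record the Pierce--variety structure. Since $\sigma$ is reflexive, every constant function on $P$ preserves $\sigma$, so $\mathbb{S}(\mathbf{P}_{\sigma})=\{\mathbf{P}_{\sigma}\}$ and there are $0$-ary terms $0,1$ taking values in two distinct $\sigma$-blocks $B_{0}\neq B_{1}$. The map $U:P^{4}\rightarrow P$ given by $U(x,y,z,w)=y$ if $z\in B_{1}$ and $w\in B_{0}$, and $U(x,y,z,w)=x$ otherwise, preserves $\sigma$ (each block is a single $\sigma$-class, so the two cases are $\sigma$-invariant), hence is a term operation of $\mathbf{P}_{\sigma}$, and $\mathbb{V}(\mathbf{P}_{\sigma})\vDash U(x,y,0,1)=x\wedge U(x,y,1,0)=y$. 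Moreover $\mathbb{V}(\mathbf{P}_{\sigma})\vDash 0=1\rightarrow x=y$ (a member in which $0=1$ is a subdirect product of algebras each satisfying $0\neq 1$, hence trivial), so $\mathbb{V}(\mathbf{P}_{\sigma})$ is a Pierce variety with $\vec{0}=0$ and $\vec{1}=1$. Finally $\mathbf{P}_{\sigma}$ has a majority term operation (build a $\sigma$-preserving majority operation block by block, using that distinct elements of a single $\sigma$-block are $\sigma$-related), so $\mathbb{V}(\mathbf{P}_{\sigma})$ is congruence distributive and has the FHP (cf.\ \cite{de}).

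Next I would describe the subdirectly irreducibles. One checks that $\mathrm{Con}(\mathbf{P}_{\sigma})=\{\Delta,\sigma,\nabla\}$: $\sigma$ is a congruence because it is preserved, and if $\Delta<\theta$ then, taking $(a,b)\in\theta$ with $a\neq b$ and using suitable unary $\sigma$-preserving maps to carry $(a,b)$ onto arbitrary pairs, one gets $\sigma\leq\theta$ when $(a,b)\in\sigma$ and $\theta=\nabla$ otherwise (see also \cite{kn}). As $\mathbb{V}(\mathbf{P}_{\sigma})$ is congruence distributive and generated by the finite algebra $\mathbf{P}_{\sigma}$ with $\mathbb{S}(\mathbf{P}_{\sigma})=\{\mathbf{P}_{\sigma}\}$, J\'{o}nsson's lemma gives that its subdirectly irreducible members are, up to isomorphism, quotients of $\mathbf{P}_{\sigma}$; by the congruence computation these are $\mathbf{P}_{\sigma}$ (subdirectly irreducible, with monolith $\sigma$) and $\mathbf{P}_{\sigma}/\sigma$, which carries all operations on $P/\sigma$ and so is primal, hence simple. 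Thus $\mathbb{V}(\mathbf{P}_{\sigma})_{SI}$ is a universal class of directly indecomposable algebras, closed under $\mathbb{S}$, and the implication $(2)\Rightarrow(1)$ of \cite[Theorem 8]{va4} (used just as in the previous proof) yields that every Pierce stalk of $\mathbb{V}(\mathbf{P}_{\sigma})$ is directly indecomposable.

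For the second assertion I would exhibit a non--subdirectly--irreducible stalk directly. Put $\mathbf{D}=\{(a,b)\in\mathbf{P}_{\sigma}\times\mathbf{P}_{\sigma}:a/\sigma=b/\sigma\}$; this is a subuniverse of $\mathbf{P}_{\sigma}\times\mathbf{P}_{\sigma}$ (if $a_{i}/\sigma=b_{i}/\sigma$ and $f$ preserves $\sigma$, then $f(\vec{a})/\sigma=f(\vec{b})/\sigma$), so $\mathbf{D}\in\mathbb{V}(\mathbf{P}_{\sigma})$. The two projections exhibit $\mathbf{D}$ as a subdirect product of two copies of $\mathbf{P}_{\sigma}$, and since $\sigma\neq\Delta$ both $\theta_{1}^{\mathbf{D}}$ and $\theta_{2}^{\mathbf{D}}$ strictly contain $\Delta$ while $\theta_{1}^{\mathbf{D}}\cap\theta_{2}^{\mathbf{D}}=\Delta$; hence $\mathbf{D}$ is not subdirectly irreducible. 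On the other hand $\mathbf{D}$ is directly indecomposable: $\mathbf{P}_{\sigma}\times\mathbf{P}_{\sigma}$ is (trivially) a global subdirect product of the family $\{\mathbf{P}_{\sigma},\mathbf{P}_{\sigma}\}$ and every subalgebra of $\mathbf{P}_{\sigma}$ is directly indecomposable, so Lemma \ref{Lemma centrals subalgebras} gives $Z(\mathbf{D})\subseteq Z(\mathbf{P}_{\sigma}\times\mathbf{P}_{\sigma})$, and by Lemma \ref{Products Center FHP} the latter equals $Z(\mathbf{P}_{\sigma})\times Z(\mathbf{P}_{\sigma})=\{(0,0),(0,1),(1,0),(1,1)\}$ (recall $Z(\mathbf{P}_{\sigma})=\{\vec{0}^{\mathbf{P}_{\sigma}},\vec{1}^{\mathbf{P}_{\sigma}}\}=\{0,1\}$, since $\mathbf{P}_{\sigma}$ is directly indecomposable). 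Intersecting with $D$ and using $0/\sigma\neq 1/\sigma$ leaves only $\{(0,0),(1,1)\}=\{\vec{0}^{\mathbf{D}},\vec{1}^{\mathbf{D}}\}$, so $Z(\mathbf{D})=\{\vec{0}^{\mathbf{D}},\vec{1}^{\mathbf{D}}\}$; hence $\mathbf{D}$ has only the trivial pair of factor congruences, its Pierce sheaf is over a one--point space with stalk $\mathbf{D}$, and therefore $\mathbf{D}$ is a Pierce stalk in $\mathbb{V}(\mathbf{P}_{\sigma})$ which is not subdirectly irreducible.

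The step I expect to be the main obstacle is the middle one: pinning down exactly which hypothesis of \cite[Theorem 8]{va4} is invoked and verifying it now that a second subdirectly irreducible algebra $\mathbf{P}_{\sigma}/\sigma$ is present (it was absent in the $2$-ary central case), i.e.\ confirming that $\mathbb{V}(\mathbf{P}_{\sigma})_{DI}$ is a universal class. The rest --- that $U$ and a suitable majority operation preserve $\sigma$, and that $\mathrm{Con}(\mathbf{P}_{\sigma})$ is the three--element chain --- consists of routine block-by-block checks, partly already contained in \cite{kn}.
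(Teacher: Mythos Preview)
Your argument is correct. For the first assertion it is essentially the paper's proof: you build a term $U$ witnessing that $\mathbb{V}(\mathbf{P}_{\sigma})$ is a Pierce variety, identify $\mathrm{Con}(\mathbf{P}_{\sigma})=\{\Delta,\sigma,\nabla\}$, list the subdirectly irreducibles as $\mathbf{P}_{\sigma}$ and $\mathbf{P}_{\sigma}/\sigma$ via J\'onsson's lemma, check $\mathbb{P}_u\mathbb{S}(\mathbb{V}(\mathbf{P}_\sigma)_{SI})\subseteq \mathbb{V}(\mathbf{P}_\sigma)_{DI}$, and invoke \cite[Theorem~8]{va4}. Your $U$ (two cases instead of three) is in fact a bit simpler than the paper's $f$, and you supply a majority term where the paper just cites \cite{kn1}; these are cosmetic differences. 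One small slip: in your closing paragraph you name the obstacle as ``$\mathbb{V}(\mathbf{P}_\sigma)_{DI}$ is a universal class'', but what is actually needed (and what you in fact verify in the body) is $\mathbb{P}_u\mathbb{S}(\mathbb{V}(\mathbf{P}_\sigma)_{SI})\subseteq \mathbb{V}(\mathbf{P}_\sigma)_{DI}$, which follows because $\{\mathbf{P}_\sigma,\mathbf{P}_\sigma/\sigma\}$ is finite, has no proper subalgebras, and consists of finite algebras.

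For the second assertion you take a genuinely different route from the paper. The paper argues by contradiction: if every Pierce stalk were subdirectly irreducible then every directly indecomposable algebra would be subdirectly irreducible, and a result from \cite{bu-sa} would force $\mathbb{V}(\mathbf{P}_\sigma)$ to be semisimple, contradicting $\sigma\in\mathrm{Con}(\mathbf{P}_\sigma)$. You instead exhibit an explicit witness $\mathbf{D}=\{(a,b):a\,\sigma\,b\}\leq \mathbf{P}_\sigma\times\mathbf{P}_\sigma$, show it is not subdirectly irreducible via the two projection kernels, and show it is directly indecomposable by feeding it through Lemma~\ref{Lemma centrals subalgebras} and Lemma~\ref{Products Center FHP} to get $Z(\mathbf{D})\subseteq\{(0,0),(1,1)\}$. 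Your approach is more constructive and self-contained (it avoids the external citation to \cite{bu-sa} and actually names the non-SI stalk), at the cost of a short extra computation; the paper's approach is shorter but less informative. Both are valid.
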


\begin{proof}
Since $\sigma $ is reflexive, we have that the constant operations on $P$
are term-operations of $\mathbf{P}_{\sigma }$. Take $0,1\in P$ such that $%
0/\sigma \neq 1/\sigma $. Let $0(w)$ and $1(w)$ be terms such that $0^{%
\mathbf{P}_{\sigma }}(a)=0$ and $1^{\mathbf{P}_{\sigma }}(a)=1$, for every $%
a\in P$. Define $f:P^{4}\rightarrow P$ as follows%
\begin{equation*}
f(x,y,z,w)=\left\{ 
\begin{array}{ccl}
x &  & \text{if }(z,0)\in \sigma \text{ and }(w,1)\in \sigma \\ 
y &  & \text{if }(z,1)\in \sigma \text{ and }(w,0)\in \sigma \\ 
0 &  & \text{otherwise}%
\end{array}%
\right.
\end{equation*}%
Since $f$ preserves $\sigma $ we have that there is a term $U(x,y,z,w)$ such
that $U^{\mathbf{P}_{\sigma }}=f$. Note that%
\begin{equation*}
\begin{array}{ccc}
U(x,y,0(w),1(w)) & = & x\medskip \\ 
U(x,y,1(w),0(w)) & = & y%
\end{array}%
\end{equation*}%
are identities of $\mathbb{V}(\mathbf{P}_{\sigma })$. Thus, in the
terminology of \cite{va4}, $\mathbb{V}(\mathbf{P}_{\sigma })$ is a Pierce
variety. Note that $\mathrm{Con}(\mathbf{P}_{\sigma })=\{\Delta ^{\mathbf{P}%
_{\sigma }},\sigma ,\nabla ^{\mathbf{P}_{\sigma }}\}$. Since $\mathbb{V}(%
\mathbf{P}_{\sigma })$ is congruence distributive \cite{kn1} and $\mathbb{S(}%
\mathbf{P}_{\sigma })=\{\mathbf{P}_{\sigma }\}$, the subdirectly
irreducibles of $\mathbb{V}(\mathbf{P}_{\sigma })$ are $\mathbf{P}_{\sigma }$
and $\mathbf{P}_{\sigma }/\sigma $. Note that $\mathbf{P}_{\sigma }/\sigma $
is primal. Thus $\mathbb{P}_{u}\mathbb{SV}(\mathbf{P}_{\sigma
})_{SI}\subseteq \mathbb{V}(\mathbf{P}_{\sigma })_{DI}$ and hence (2)$%
\Rightarrow $(1) of \cite[Theorem 8]{va4} says that every Pierce stalk is
directly indecomposable.

Suppose every Pierce stalk of $\mathbb{V}(\mathbf{P}_{\sigma })$ is
subdirectly irreducible. We will arrive to a contradiction. Since every
directly indecomposable algebra of $\mathbb{V}(\mathbf{P}_{\sigma })$ is a
Pierce stalk of itself we have that every directly indecomposable member of $%
\mathbb{V}(\mathbf{P}_{\sigma })$ is subdirectly irreducible. By \cite[%
IV.12.5]{bu-sa} we have that $\mathbb{V}(\mathbf{P}_{\sigma })$ is
semisimple which is impossible since $\sigma \in \mathrm{Con}(\mathbf{P}%
_{\sigma })$. Thus we have arrived to an absurd and hence we have proved
that there are Pierce stalks which are not subdirectly irreducible.
\end{proof}

\bigskip

\end{document}